\newtheorem{thm}{Theorem}[section]
\newtheorem{lem}[thm]{Lemma}
\newtheorem{prop}[thm]{Proposition}
\newtheorem{cor}[thm]{Corollary}
\newtheorem{conj}[thm]{Conjecture}
\theoremstyle{definition}
\newtheorem{defn}[thm]{Definition}
\newtheorem{exm}[thm]{Example}
\newtheorem{rem}[thm]{Remark}
\numberwithin{equation}{section}
\author{P. Wright}
\address{
Mathematics Department\\
University of Western Australia\\
Western Australia}
\email{paul.e.wright@uwa.edu.au}
\thanks{}
\keywords{Dynamical systems, billiards, dimension, Hausdorff}
\subjclass{Primary 37D20, Secondary 37D40}
\begin{document}

\title[Estimates of Hausdorff dimension]{Estimates of Hausdorff dimension for non-wandering sets of higher dimensional open billiards}

\begin{abstract}
This article concerns a class of open billiards consisting of a finite number of strictly convex, non-eclipsing obstacles $K$. The non-wandering set $M_0$ of the billiard ball map is a topological Cantor set and its Hausdorff dimension has been previously estimated for billiards in $\mathbb{R}^2$, using well-known techniques. We extend these estimates to billiards in $\mathbb{R}^n$, and make various refinements to the estimates. These refinements also allow improvements to other results. We also show that in many cases, the non-wandering set is confined to a particular subset of $\mathbb{R}^n$ formed by the convex hull of points determined by period 2 orbits. This allows more accurate bounds on the constants used in estimating Hausdorff dimension. 
\end{abstract}

\maketitle

\section{Introduction}
A billiard is a dynamical system in which a single pointlike particle moves at constant speed in some domain $Q \subset \mathbb{R}^D$ and reflects off the boundary $\partial Q$ according to the classical laws of optics \cite{ChaoticBilliards}. We describe a particle in the billiard by $x_t = (q_t, v_t)$ where $q_t \in Q$ is the position of the particle and $v_t \in \mathbb{S}^{D-1}$ is its velocity at time $t$. Then for as long as the particle stays inside $Q$, it satisfies 
$$(q_{t+s}, v_{t+s}) = S_s(x_t)= (q_t + s v_t, v_t).$$

\noindent Collisions with the boundary are described by 
$$v^+ = v^- - 2 \langle v^-, n \rangle n,$$
where $n$ is the normal vector (into $Q$) of $\partial Q$ at the point of collision, $v^-$ is the velocity before reflection and $v^+$ is the velocity after reflection.

Open billiards are a class of billiard in which the domain $Q$ is unbounded. We consider open billiards in which $Q = \mathbb{R}^D \backslash K$, where $K = K_1 \cup \ldots \cup K_u$ is a union of pairwise disjoint, compact and strictly convex sets with $C^2$ boundary, for some integer $u \geq 3$. The $K_i$ are called \textit{obstacles}. We assume that the \textit{no-eclipse condition} $(\textbf{H})$ holds. That is, for any nonequal $i,j,k$, the convex hull of $K_i \cup K_j$ does not intersect $K_k$. This condition ensures that the non-wandering set (defined later) does not include trajectories that are tangent to the boundary.

We denote by $n = n_K(q)$ the outward normal vector of $\partial K$ at $q$. Let $\hat{Q} = \{(q,v) \in Q \times \mathbb{S}^{D-1} | q \in \mbox{int } Q \mbox{ or } \langle n,v \rangle \geq 0 \}$ be the phase space of $S_t$ with canonical projection $\pi: \hat{Q} \rightarrow Q$. Let $M = \{(q,v) \in \partial K \times \mathbb{S}^{D-1} | \langle n,v \rangle \geq 0 \}$ be the boundary of $\hat{Q}$.

The \textit{non-wandering set} of a dynamical system is the set of points whose trajectories never escape from the system i.e. the set of points $x$ such that the full trajectory $\{S_t(x): t \in \mathbb{R}\}$ is bounded. The non-wandering set of the flow is denoted $\Omega(S)$ or $\Omega$. Its restriction to the boundary is $M_0 = \Omega \cap (\partial K \times S^{D-1})$. Equivalently, $M_0 = \{x \in M: |t_j(x)| < \infty \mbox{ for all } j \in \mathbb{Z} \}$, where $t_j(x) \in [-\infty, \infty]$ denotes the time of the $j$-th reflection of $x \in \hat{Q}$. Let $d_j(x) = t_j(x) - t_{j-1}(x)$. Let $\hat{Q}' = t_1^{-1}(0,\infty)$, $M' = M \cap \hat{Q}'$ and define the \textit{billiard ball map} as $B: M' \rightarrow M, x \rightarrow S_{t_1(x)}(x)$. Then $B$ is invertible and $C^2$ (in general $B$ is at least as smooth as the boundaries of the obstacles), except where $v$ is tangent to $K$ at $Bx$, and its restriction to $M_0$ is a bijection. $M_0$ is the non-wandering set of the billiard ball map; this non-wandering set is the main focus of this paper.

\section{Main Theorem}

The main result of this paper is in three parts.

\begin{thm}

Let $K = K_1 \cup \ldots \cup K_u \subset \mathbb{R}^D$ be disjoint, compact and strictly convex sets with smooth boundary, for some integer $u \geq 3$. Let $B$ be the \textit{billiard ball map} in $Q = \mathbb{R}^D \backslash K$. Let $\lambda_1^{-1} = 1 + d_{\max} g_{\max}$ and $\mu_1^{-1} = 1 + d_{\min} g_{\min}$, where $d_{\min}$, $d_{\max}$, $g_{\min}$ and $g_{\max}$ are constants depending on the billiard, defined in Sections \ref{Properties of open billiards} and \ref{better est}. Then the Hausdorff dimension of the non-wandering set $M_0$ of $B$ is given as follows:

\begin{enumerate}
	\item If $D = 2$, then
	\begin{equation} \label{dim est}
	\frac{-2 \ln (u-1)}{\ln \lambda_1} \leq \dim_H M_0 \leq \frac{-2 \ln (u-1)}{\ln \mu_1}.
	\end{equation}
	
	\item If $D \geq 3$, and the obstacles $K_i$ are sufficiently far apart that $\lambda_1^{d_{\max}} < \mu_1^{2 d_{\min}}$, then equation (\ref{dim est}) holds, although note that $g_{\min}$ is different in the higher dimensional case.
	
	\item We always have
	\begin{equation} \label{dim est alpha}
	\alpha \frac{-2 \ln (u-1)}{\ln \lambda_1} \leq \dim_H M_0 \leq \alpha^{-1} \frac{-2 \ln (u-1)}{\ln \mu_1},
	\end{equation}
	where $\alpha = \frac{2 d_{\min} \ln \mu_1}{d_{\max} \ln \lambda_1}$ is a particular H\"{o}lder constant, calculated in section \ref{Holder const}.

\end{enumerate}

\end{thm}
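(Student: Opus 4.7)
The plan is to code the non-wandering set as a subshift of finite type and apply a Moran-cover / Bowen-type estimate, handling the non-conformality that appears in dimension $D\geq 3$ via Hölder regularity of the stable holonomy.

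First I would establish a symbolic coding of $M_0$. Under the no-eclipse condition $(\textbf{H})$, every $x\in M_0$ is uniquely specified by the bi-infinite sequence $(i_j)_{j\in\mathbb{Z}}$ where $i_j\in\{1,\ldots,u\}$ is the index of the obstacle hit at the $j$-th reflection, and the only forbidden words are the doubles $ii$. This identifies $M_0$ with the subshift $\Sigma_A$ on $u$ symbols with $u(u-1)^{2n}$ admissible words of length $2n+1$, and conjugates $B|_{M_0}$ with the shift. I would then define the cylinders $C(i_{-n},\ldots,i_n)\subset M_0$ and estimate their unstable/stable diameters using the strict convexity of the obstacles together with $(\textbf{H})$. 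The essential input is that the expansion on unstable cones and contraction on stable cones per bounce is bracketed between the factors $\mu_1^{-1} = 1+d_{\min}g_{\min}$ and $\lambda_1^{-1}=1+d_{\max}g_{\max}$ defined earlier, so each cylinder has unstable diameter between constants times $\mu_1^n$ and $\lambda_1^n$, and likewise for stable diameters.

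For part (1), in $D=2$ the unstable manifold is one-dimensional and $B$ is conformal along it. A cylinder of length $2n+1$ is approximately a product of intervals of comparable size, and a standard Moran-cover argument (equivalently, solving the Bowen pressure equation $P(-s\log\|dB^u\|)=0$) together with the count $u(u-1)^{2n}$ yields the two bounds in (\ref{dim est}): letting $(u-1)^{2n}\lambda_1^{ns}$ balance to a constant gives the lower dimension estimate, and the same with $\mu_1$ gives the upper. For part (2), in $D\geq 3$ the map on unstable leaves is no longer conformal, but the hypothesis $\lambda_1^{d_{\max}}<\mu_1^{2d_{\min}}$ is precisely the bunching condition ensuring that the stable holonomy between unstable leaves is (bi-)Lipschitz and that cylinders remain sufficiently ``round''; the 2D argument then goes through verbatim to reproduce (\ref{dim est}).

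For part (3), without bunching the stable holonomy is only $\alpha$-Hölder, and the Hölder exponent is $\alpha=\frac{2d_{\min}\ln\mu_1}{d_{\max}\ln\lambda_1}$ (computed in the dedicated section). I would then invoke the fact that an $\alpha$-Hölder map $f$ satisfies $\dim_H(f(X))\leq\alpha^{-1}\dim_H(X)$: applying this to the holonomy that spreads a single unstable leaf slice of $M_0$ across a full neighbourhood, and combining with a conformal-style calculation along the leaf itself, inserts exactly the factors $\alpha$ and $\alpha^{-1}$ into the bounds of part (1), producing (\ref{dim est alpha}).

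The main obstacle is part (3) together with the non-conformal analysis feeding into part (2). Obtaining sharp dimension bounds requires keeping careful track of the directionally-varying expansion rates of $dB$, verifying the invariant-manifold regularity needed for the Hölder exponent $\alpha$ to be exactly what is claimed, and confirming that the cylinder distortion estimates transfer correctly between leaves. Once the geometric control of cylinder diameters and the Hölder exponent of the stable holonomy are established, the dimension computation itself is a routine Moran-cover / pressure argument.
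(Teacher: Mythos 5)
Your proposal follows essentially the same route as the paper: symbolic coding of $M_0$ by the subshift forbidding repeated symbols, cylinder diameters bracketed by $c\lambda_1^{n}$ and $C\mu_1^{n}$ via the per-bounce curvature/expansion bounds, leafwise dimension $-\ln(u-1)/\ln\theta$ from the symbolic structure, and then the local product structure with Lipschitz holonomy under the bunching condition $\lambda_1^{d_{\max}}<\mu_1^{2d_{\min}}$ for parts (1)--(2) and the H\"older exponent $\alpha$ distorting the sum $d^{(s)}+d^{(u)}$ for part (3). The only differences are cosmetic --- you package the leaf computation as a Moran-cover/pressure argument where the paper uses Lipschitz comparisons with the symbolic metrics $d_\lambda,d_\mu$ plus the packing-dimension product inequality, you state the diameter bracket with $\lambda_1^n$ and $\mu_1^n$ in inverted order, and you suppress the paper's limiting step $a\to g_{\min}$, $b\to g_{\max}$ needed because the curvatures only enter $[g_{\min},g_{\max}]$ after finitely many reflections.
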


\begin{rem}
Hassleblatt and Schmeling present a conjecture in \cite{HSconjecture} that would imply that $\alpha = 1$ for any billiard, making the above theorem much stronger. This will be discussed in section \ref{DPS}.
\end{rem}

Part 1 was essentially proved in \cite{Kenny}, except that the improvements to estimates in Section \ref{better est} can be applied. We deal with the higher dimensional case here.

\section{Properties of open billiards}
\label{Properties of open billiards}

The following lemma is well known (see for example \cite{Stoyanov2}).
\begin{lem}
\label{findorbits}

If $K$ satisfies the no-eclipse condition $(\textbf{H})$, then for any finite sequence of indices $1 \leq i_1, \ldots , i_n \leq u$ $(n \geq 3)$ such that $i_j \neq i_{j+1}$ for all $j$, let

$$F: K_{i_1} \times \ldots \times K_{i_n} \rightarrow \mathbb{R}, (q_1, \ldots, q_n) \mapsto \displaystyle \sum_{j=1}^n \|q_j - q_{j+1}\|,$$

\noindent where we denote $q_{n+1} = q_1$. Then $F$ achieves its minimum at some $(p_1, \ldots p_n)$ such that $p_j \in \partial K_{i_j}$ for all $j$. Specifically, the $p_j$ are the successive reflection points of a periodic billiard trajectory in $Q$ with $p_{j+1} = B p_j$ and $p_1 = B p_n$.

\end{lem}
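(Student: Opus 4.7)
The plan is to extract the periodic orbit as a variational minimum, in three short steps: existence of a minimizer, showing each coordinate lies on the boundary of its obstacle, and verifying the reflection law via a Lagrange-multiplier computation.

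First, since each $K_{i_j}$ is compact and $F$ is continuous on the product $K_{i_1}\times\cdots\times K_{i_n}$, a minimizer $(p_1,\ldots,p_n)$ exists by compactness. Next I would show $p_j\in\partial K_{i_j}$ for each $j$. Fix all coordinates but the $j$-th and consider $f_j(q)=\|p_{j-1}-q\|+\|q-p_{j+1}\|$ on $\mathbb{R}^D$; this is convex, and by the triangle inequality its minimum set is exactly the segment $[p_{j-1},p_{j+1}]$. I would then check this segment is disjoint from $K_{i_j}$ by a case split: if $i_{j-1},i_j,i_{j+1}$ are pairwise distinct, the no-eclipse condition $(\textbf{H})$ applied to the convex hull of $K_{i_{j-1}}\cup K_{i_{j+1}}$ rules out $K_{i_j}$; if instead $i_{j-1}=i_{j+1}$, the segment is contained in $K_{i_{j-1}}$ by convexity, which is disjoint from $K_{i_j}$. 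In either case the minimum of $f_j$ over $K_{i_j}$ must be attained on $\partial K_{i_j}$.

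Finally I would verify the reflection law. Let $v_{j-1}=(p_j-p_{j-1})/\|p_j-p_{j-1}\|$ and $w_j=(p_{j+1}-p_j)/\|p_{j+1}-p_j\|$; a direct computation gives $\nabla f_j(p_j)=v_{j-1}-w_j$. Since $f_j$ is convex, $p_j$ is a boundary minimum over $K_{i_j}$, and the unconstrained minimum of $f_j$ lies outside $K_{i_j}$, the first-order condition forces $\nabla f_j(p_j)=-\mu\,n_j$ with $\mu\geq 0$, where $n_j$ is the outward unit normal to $\partial K_{i_j}$ at $p_j$. Imposing $\|w_j\|=1=\|v_{j-1}\|$ then yields $\mu=-2\langle v_{j-1},n_j\rangle$ and hence $w_j=v_{j-1}-2\langle v_{j-1},n_j\rangle n_j$, which is precisely the reflection law. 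Strict positivity $\mu>0$ (non-tangential incidence) follows because $v_{j-1}=w_j$ would force $p_j$ onto the segment $[p_{j-1},p_{j+1}]$, contradicting the previous step. Therefore $(p_1,\ldots,p_n)$ is a periodic billiard trajectory with $p_{j+1}=Bp_j$ and $p_1=Bp_n$.

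The main obstacle will be the second step: one has to invoke $(\textbf{H})$ in exactly the right form and separately handle the case $i_{j-1}=i_{j+1}$, which the distinct-adjacent-indices hypothesis does not preclude. The first and third steps are routine compactness and convex-optimization arguments, respectively.
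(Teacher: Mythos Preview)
Your proof is correct and follows the standard variational argument for this well-known lemma. The paper does not supply its own proof at all; it simply states the result as well known and refers to \cite{Stoyanov2}, so there is nothing to compare your argument against in the paper itself. Your three-step outline (compactness for existence, convexity plus $(\textbf{H})$ or disjointness for the boundary claim, and the Lagrange-multiplier computation for the reflection law) is exactly the classical route, and your handling of the $i_{j-1}=i_{j+1}$ case is the right way to fill the one place where $(\textbf{H})$ alone does not apply. One minor quibble: in the third step you should phrase the norm condition as giving the dichotomy $\mu=0$ or $\mu=-2\langle v_{j-1},n_j\rangle$ and then rule out $\mu=0$, rather than writing ``yields $\mu=-2\langle v_{j-1},n_j\rangle$'' and only afterward excluding tangency; as written the logic reads slightly circularly, though the content is fine.
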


\subsection{Billiard constants}

\begin{defn}
At each point on a hypersurface $M$, the shape operator or second fundamental form (s.f.f.) $S_p : T_p(M) \rightarrow T_p(M)$ is defined by $S_p(v) = - \nabla_v n_M(p)$. The curvature of M at $p$ in the direction of a unit vector $\hat{u} \in T_p(M)$ is $k_p(\hat{u}) = S_p(\hat{u}) \cdot \hat{u}$.
\end{defn}

Every billiard has several associated constants that can be useful in various estimates. The s.f.f $S_q(K)$ of $K$ at $q$ has $n-1$ eigenvalues, or principle curvatures. Let $\kappa_{\min}(q), \kappa_{\max}(q)$ denote the smallest and largest eigenvalues respectively at $q$. The billiard has minimum and maximum curvatures $\kappa^- = \displaystyle \min_{q \in \pi M_0} \kappa_{\min}(x)$ and $\kappa^+ = \displaystyle \max_{x \in M_0} \kappa_{\max}(x)$. We denote $d_{\min} = \min\{ d^-_{ij} : 1 \leq i,j \leq u \}$ and $d_{\max} = \max \{d^-_{ij} : 1 \leq i,j \leq u \}$, where $d^-_{ij}$ and $d^+_{ij}$ are the respective minimum and maximum of the set $\{d(\pi x, \pi y): x \in K_i \cap M_0, y \in K_j \cap M_0 \}$. For a point $x = (q,v) \in M$, we call $\phi(x) = \arccos \langle v, \nu_K(q) \rangle$ the \textit{collision angle}, the acute angle which the $j$-th reflected ray makes with the outer normal to $K$. We denote $\phi_j(x) = \phi(B^j x)$. The collision angle can be bounded above by some constant $\phi^+ = \max\{\phi(x): x \in M_0 \}$. It can easily be shown that $\phi^+ \leq \arccos(b^- / d_{\max})$, where $b^- = \displaystyle \min_{i,j,k} d(K_j, \mbox{Cvx}(K_i,K_k))$.


\section{Convex fronts}

Let $X$ be a smooth, stricly convex $D-1$ dimensional surface in int $Q$ with outer normal field $v(q)$, let $\hat{X} = \{(q, v(q)): q \in X \}$, $\hat{X}_0 = \hat{X} \cap \Omega$ and $X_0 = \pi \hat{X_0}$ where $\pi$ is the canonical projection.  Let $x,y \in X$. Let $Y: q(s), s \in [0,1]$ be a $C^3$ curve on $X$ with outer normal field parametrised by $v(s) = v(q(s))$. Let $Y_0 = Y \cap X_0$, $\hat{X}_t = S_t(\hat{X})$, $X_t = \pi \hat{X}_t$, $\hat{Y}_t = S_t(\hat{Y})$, $Y_t = \pi \hat{Y}_t$ and $t_j(s) = t_j(q(s),n(s))$. Where defined, let $q_j(s) = \pi B^j(q(s),v(s))$ be the $j$-th reflection point of $(q(s),v(s))$, then let $d_j(s) = t_j(s) - t_{j-1}(s)$, and $\phi_j(s) = \phi_j(q(s),v(s))$.

For a point $q \in \partial K$, let $\mathcal{J}$ denote the tangent space $T_q(X)$ of the convex front, and let $\mathcal{T}$ denote the tangent space of $\partial K$ at $q$. The s.f.f of $X$ at $q$ is given by $\mathcal{B}: \mathcal{J} \rightarrow \mathcal{J}$, $\mathcal{B} dv = S_q(dv)$.

\subsection{Evolution of Fronts}

With no collisions, the curvature of a convex front $X$ front is given by the formula \cite{BalChe} 
$$\mathcal{B}(q_t(s)) = (\mathcal{B}(q)^{-1} + t I)^{-1}.$$

At a collision point, let $B^-$ be the second fundamental form just before the collision and let $\mathcal{B}^+$ be the s.f.f. just after the collision. Then 
$$\mathcal{B}^+ = \mathcal{B}^- + 2 \Theta = \mathcal{B}^- + 2 \langle n, v \rangle V^* KV,$$
where $V: \mathcal{J} \rightarrow \mathcal{T}$ is the projection $V dv = dv - \frac{\langle dv, n \rangle}{\langle n,v \rangle} v \in \mathcal{T}$, $K: \mathcal{T} \rightarrow \mathcal{T}$ is the s.f.f. of $K$ at $q$, $V^*: \mathcal{T} \rightarrow \mathcal{J}$ is the projection $V^* dq = dq - \frac{\langle dq, v \rangle}{\langle n,v \rangle} n \in \mathcal{J}$, and $\langle n, v \rangle = \cos \phi$ where $\phi \in [0,\frac{\pi}{2}]$ is the collision angle. 

\subsection{Estimating $\Theta$}
\label{Estimating Theta}
\begin{lem}
If the dimension $n$ is greater than 2, let $\kappa_{\min}$, $\kappa_{\max}$ be the smallest and largest eigenvalues of the s.f.f. $K$ at $q$, so that $\kappa_{\min} |dq| \leq \|Kdq \| \leq \kappa_{\max} |dq|$. Then
$$\kappa_{\min} \cos \phi \leq \|\Theta\| \leq \frac{\kappa_{\max}}{\cos \phi}.$$
\end{lem}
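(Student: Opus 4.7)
The plan is to recognize $\Theta = \cos\phi\, V^* K V$ as a self-adjoint, positive semi-definite operator on $\mathcal{J}$ and extract its operator norm from its Rayleigh quotient. First, I would verify by direct calculation (using $\langle dv,v\rangle = 0$ for $dv \in \mathcal{J}$ and $\langle dq,n\rangle = 0$ for $dq \in \mathcal{T}$) that
$$\langle Vdv, dq\rangle = \langle dv, dq\rangle - \frac{\langle dv,n\rangle\langle v,dq\rangle}{\cos\phi} = \langle dv, V^* dq\rangle,$$
so $V^*$ really is the adjoint of $V$. Since $K$ is the shape operator of a strictly convex $C^2$ surface, it is self-adjoint with eigenvalues in $[\kappa_{\min},\kappa_{\max}] \subset (0,\infty)$. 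Hence $\Theta$ is self-adjoint, and
$$\langle \Theta dv, dv\rangle = \cos\phi\,\langle K Vdv, Vdv\rangle \geq 0,$$
so $\Theta$ is positive semi-definite. Its operator norm is therefore its largest eigenvalue, which in turn equals $\sup_{|dv|=1}\langle \Theta dv, dv\rangle$ and bounds $\lambda_{\min}(\Theta)$ from above.

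Next, I would compute $|Vdv|^2$ for unit $dv \in \mathcal{J}$. Expanding and using $\langle dv, v\rangle = 0$ and $|v|=1$,
$$|Vdv|^2 = |dv|^2 + \frac{\langle dv, n\rangle^2}{\cos^2\phi} = 1 + \frac{\langle dv, n\rangle^2}{\cos^2\phi}.$$
The supremum of $|\langle dv, n\rangle|$ over unit $dv \in \mathcal{J}$ is $\sin\phi$, attained by (the normalised) orthogonal projection of $n$ onto $\mathcal{J}$, so $1 \leq |Vdv|^2 \leq 1/\cos^2\phi$.

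The conclusion then follows by combining the two preceding steps with the Rayleigh estimate $\kappa_{\min}|w|^2 \leq \langle Kw,w\rangle \leq \kappa_{\max}|w|^2$ for $w \in \mathcal{T}$. For the upper bound,
$$\langle \Theta dv, dv\rangle \leq \cos\phi \cdot \kappa_{\max}|Vdv|^2 \leq \frac{\kappa_{\max}}{\cos\phi}$$
for all unit $dv \in \mathcal{J}$, yielding $\|\Theta\| \leq \kappa_{\max}/\cos\phi$. For the lower bound,
$$\langle \Theta dv, dv\rangle \geq \cos\phi \cdot \kappa_{\min}|Vdv|^2 \geq \kappa_{\min}\cos\phi,$$
so by positive semi-definiteness $\|\Theta\| = \lambda_{\max}(\Theta) \geq \lambda_{\min}(\Theta) \geq \kappa_{\min}\cos\phi$. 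I do not anticipate a genuine obstacle; the only care needed is in verifying the adjoint identity and the positivity of $K$, after which both bounds drop out of the same Rayleigh-quotient estimate.
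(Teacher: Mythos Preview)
Your argument is correct and takes a genuinely different route from the paper. The paper introduces the subspace $\mathcal{S}=\mathcal{J}\cap\mathcal{T}$, writes $dv=|dv|(\hat a\cos\theta+\hat s\sin\theta)$ with $\hat s\in\mathcal{S}$ and $\hat a\in\mathcal{J}\cap\mathcal{S}^\perp$, computes $\|Vdv\|=\sqrt{1+\tan^2\phi\,\cos^2\theta}\,|dv|$ (and the analogous formula for $V^*$), and then bounds $\|\Theta\|$ above and below by multiplying the extremal stretch factors of $V$, $K$, and $V^*$. You instead observe that $V^*$ is literally the Hilbert-space adjoint of $V$, so $\Theta=\cos\phi\,V^*KV$ is self-adjoint and positive semidefinite, and then read off both bounds from the single Rayleigh quotient $\langle\Theta dv,dv\rangle=\cos\phi\,\langle KVdv,Vdv\rangle$ together with $1\le |Vdv|^2\le 1/\cos^2\phi$.

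Your identity $|Vdv|^2=1+\langle dv,n\rangle^2/\cos^2\phi$ is equivalent to the paper's formula (since $\langle dv,n\rangle=|dv|\sin\phi\cos\theta$ in its notation), so the underlying computation agrees; the difference is structural. What your approach buys: it is shorter, avoids choosing a basis adapted to $\mathcal{S}$, and---because it bounds the full Rayleigh quotient---it in fact shows that \emph{every} eigenvalue of $\Theta$ lies in $[\kappa_{\min}\cos\phi,\kappa_{\max}/\cos\phi]$, which is exactly what the subsequent curvature recursion uses. What the paper's approach buys: it makes the geometry of the two projections $V,V^*$ explicit and gives the exact stretch factor as a function of the angle $\theta$ between $dv$ and $\mathcal{S}$, information that is not visible in the Rayleigh-quotient argument.
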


\begin{proof}
If $n = v$ then $\langle n, v \rangle = \cos \phi = 1$ so $\Theta = \langle n,v \rangle V^*KV = K$ and the inequality holds. Henceforth we assume $n \neq v$. Let $\mathcal{S} = \mathcal{J} \cap \mathcal{T}$. Any vector $dv \in \mathcal{J}$ can be written in the form $dv = |dv| (\hat{a} \cos{\theta} + \hat{s} \sin{\theta})$, where $\hat{s} \in \mathcal{S}$ and $\hat{a} \in \mathcal{J}$ are unit vectors, $\hat{a}$ is perpendicular to $\mathcal{S}$, and $\langle \hat{a}, n \rangle \geq 0$. Then $\hat{a}$ is in the plane containing by $n$ and $v$ so the angle between $\hat{a}$ and $n$ is $\frac{\pi}{2} - \phi$. Using $dv \perp v$ we get

\begin{align*}
\|V dv\|  &= \|dv - \frac{\langle |dv| \hat{s} \sin{\theta}, n \rangle}{\langle n,v \rangle} v - \frac{\langle |dv| \hat{a} \cos{\theta}, n \rangle}{\langle n,v \rangle} v \|\\
					&= \|dv - \left( |dv| \tan \phi \cos \theta \right) v \|\\
					&= \sqrt{1 + \tan^2 \phi \cos^2 \theta} |dv|
\end{align*}

Similarly, write $dq \in \mathcal{T}$ as $dq = |dq| ((\hat{b} \cos{\theta'} + \hat{s'} \sin{\theta'})$ for some unit vectors $\hat{s'} \in \mathcal{S}$ and $\hat{b} \in \mathcal{T}$ with $\hat{b} \perp \mathcal{S}$ and $\langle \hat{b}, \hat{v} \rangle \geq 0$. Then $\|V^* dq\| = \sqrt{1 + \tan^2 \phi \cos^2 \theta'} |dq|$. Combining these operator norms and using $0 \leq \cos^2 \theta, \cos^2 \theta' \leq 1$, we get
\begin{align*}
\kappa_{\min} \cos \phi &\leq \cos \phi \sqrt{1 + \tan^2 \phi \cos^2 \theta} \kappa_{\min} \sqrt{1 + \tan^2 \phi \cos^2 \theta'} \\
						&\leq \|\Theta\|	\leq \cos \phi \sqrt{1 + \tan^2 \phi \cos^2 \theta} \kappa_{\max} \sqrt{1 + \tan^2 \phi \cos^2 \theta'}\\
						&\leq \cos \phi \kappa_{\max} (1 + \tan^2 \phi) \leq \frac{\kappa_{\max}}{\cos \phi}
\end{align*}
as required.
\end{proof}

Note that in the two dimensional case, $\theta = \theta' = 0$ since $\mathcal{S} = \mathcal{T} \cap \mathcal{J} = \{q\}$, and $\kappa_{\min} = \kappa_{\max} = \kappa$ at every point. So the inequality becomes $\|\Theta\| = \frac{\kappa}{\cos \phi}$.

\subsection{Estimating $k_j$}
\label{deltature}

This section follows the definitions in \cite{Non-integrability}. Let $u_j(s) = \displaystyle \lim_{\tau \downarrow t_j(s)} \frac{d}{ds} S_{\tau} q(s)$ and let $\hat{u}_j(s) = \frac{u_j(s)}{\|u_j(s)\|}$ be the unit tangent vector of $Y_t$ at $q(s)$. Let $\mathcal{B}_j$ be the s.f.f. of $S_{t_j(s)} X$ at $q_j(s)$. Define $\ell_j(s) > 0$ by

$$[1 + d_j(s) \ell_j(s)]^2 = \| \hat{u}_j(s) + d_j(s) \mathcal{B}_j \hat{u}_j(s) \|^2,$$

Then set $\delta_j(s) = \frac{1}{1 + d_j(s) \ell_j(s)}.$

\begin{prop} 
\label{q = p delta}
Fix a point $x_0 = (q_0,v_0) \in \hat{X}$, a positive integer $m$ and some $\tau$ with $t_m(x_0) < \tau < t_{m+1}(x_0)$. Let $Y: [0,a] \rightarrow X$ be a $C^3$ curve with $q(0) = q_0$ with $a$ small enough that for every $s \in [0,a]$ we have $t_m(x(s)) < \tau < t_{m+1}(x(s))$, where $x(s) = (q(s), \nu_X(q(s)))$, and that for all $j = 1, \ldots , m$ the points $q_j(s) \in \partial K_{i_j}$ for all $s \in [0,a]$. Then $p(s) = \pi S_t(x(s))$ is a $C^3$ curve on $X_t$. For all $s \in [0,1]$ we have

$$\|q'(s) \| = \frac{\|p'(s) \|}{1 + (\tau - t_m(s)) k_m(s))} \delta_0(s) \delta_1(s) \ldots \delta_m(s).$$

\begin{proof}
See \cite{Non-integrability, Spectrum}. The same result can be derived from \cite{BalChe}, and is also proved for completeness (in two dimensions only) in \cite{Kenny}.
\end{proof}

\end{prop}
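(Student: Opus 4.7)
The plan is to prove the formula by induction on the number of reflections $m$, using the evolution formulas for convex fronts under free flight and at collisions derived in the previous subsections.

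For the base case $m=0$, no reflection occurs in $(0,\tau)$, so $p(s) = q(s) + \tau\,\nu_X(q(s))$. Differentiating in $s$ and using $d\nu_X/ds = \mathcal{B}_0\, q'(s)$ yields
\[
p'(s) = (I + \tau\,\mathcal{B}_0)\, q'(s),
\]
where $\mathcal{B}_0$ is the shape operator of $X$ at $q(s)$. Taking norms gives $\|p'(s)\| = \|\hat{u}_0(s) + \tau\,\mathcal{B}_0 \hat{u}_0(s)\|\,\|q'(s)\|$, which by the defining relation $[1 + d_j \ell_j]^2 = \|\hat{u}_j + d_j \mathcal{B}_j \hat{u}_j\|^2$ of Section \ref{deltature} equals $(1+\tau\,\ell_0(s))\|q'(s)\|$. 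Rearranging produces the $m=0$ case.

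For the inductive step, I would split the evolution over $[0,\tau]$ into the portion up to just before the $m$-th reflection, the reflection itself, and the final free flight of length $\tau - t_m(s)$. Applying the induction hypothesis to the first portion (with the target time approaching $t_m(s)$ from below) accounts for the first $m-1$ collisions and produces the partial product $\delta_0(s)\cdots \delta_{m-1}(s)$, paired with a tangent vector measured on $X_{t_m^-}$. I would then propagate the shape operator through the reflection using $\mathcal{B}_m^+ = \mathcal{B}_m^- + 2\Theta_m$, and forward under free flight via $\mathcal{B}(q_t) = (\mathcal{B}(q)^{-1} + tI)^{-1}$. The vector $\hat{u}_m + d_m \mathcal{B}_m \hat{u}_m$ appearing in the definition of $\ell_m$ is designed so that the combined effect of the $m$-th free flight and the $m$-th reflection collapses into the scalar factor $\delta_m(s) = (1 + d_m(s)\,\ell_m(s))^{-1}$, while the last free segment contributes the denominator $1 + (\tau - t_m(s))\,\ell_m(s)$ exactly as in the base case.

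The main obstacle is direction-tracking under reflection. In dimension two, $\mathcal{J}$ and $\mathcal{T}$ are one-dimensional, the unit vectors $\hat{u}_j$ are canonical, and everything reduces to a scalar calculation as in \cite{Kenny}. In higher dimensions, the shape operators are genuine matrices and the reflection formula involves the projections $V$ and $V^*$ between distinct tangent planes. One must verify that, along the single direction specified by the curve $Y$ at each step, the contribution of the reflection to the tangent vector length depends only on the scalar $\ell_j$ rather than on the whole operator $\mathcal{B}_j$, so that the telescoping of per-step ratios produces the clean product $\delta_0(s)\delta_1(s)\cdots\delta_m(s)$ rather than some matrix product. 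This is the delicate bookkeeping carried out in detail in \cite{Non-integrability, Spectrum}.
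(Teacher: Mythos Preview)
The paper does not supply its own proof of this proposition; it simply cites \cite{Non-integrability, Spectrum, BalChe, Kenny}. Your sketch is precisely the argument those references carry out: induction on the number of reflections, propagating the tangent vector to the curve $Y$ under free flight via $u \mapsto (I + t\,\mathcal{B})u$ and across collisions, so that the ratios $\|u_{j+1}\|/\|u_j\|$ telescope into the product of the $\delta_j^{-1}$.

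One point where you overstate the difficulty: the billiard reflection acts on tangent vectors to the front as a Euclidean reflection (in the tangent hyperplane to $\partial K$), hence it is an \emph{isometry}. So the length of $u_j$ does not change across a collision; all the stretching comes from the free-flight factors $\|\hat u_j + d\,\mathcal{B}_j\hat u_j\|$. The matrix subtlety you flag is real, but it shows up only in tracking the \emph{direction} $\hat u_{j+1}$ and the updated operator $\mathcal{B}_{j+1}$ through $V, V^*$ and $2\Theta$, not in the length ratio at the instant of reflection. This is why the scalar product $\delta_0\cdots\delta_m$ survives cleanly even when $D>2$. With that simplification, your inductive scheme goes through exactly as in the cited papers.

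A minor notational slip: in the base case you obtained the factor $1+\tau\,\ell_0(s)$, whereas the stated formula has $1+(\tau - t_m)\,k_m$ in the denominator. The quantity that actually appears from the computation is $\|\hat u_m + (\tau - t_m)\mathcal{B}_m\hat u_m\|$, which agrees with $1 + (\tau - t_m)k_m$ only to first order; the paper's notation here is slightly loose (and harmless for the later estimates), so this is not a defect in your argument.
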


Now the curvature of the convex front after $j$ reflections in the direction $\hat{u}_j$ is $k_j = \langle \mathcal{B}_j \hat{u}_j, \hat{u}_j \rangle$, so

$$1 / \delta_j(s)^2 = 1 + 2 d_j(s) k_j(s) + d_j(s)^2 \|\mathcal{B}_j \hat{u}_j(s) \|^2.$$

Let $q \in X$ and let $x = (q, \nu_X(q))$. Let $\mu_j(s)$ and $\lambda_j(s)$ be the minimum and maximum eigenvalues of $\mathcal{B}_j(q(s))$ respectively.

Recall that $\mathcal{B}_{j+1} = \mathcal{B}_{j+1}^- + 2 \Theta = (\mathcal{B}_j^{-1} + d_j I)^{-1} + 2 \Theta$. $\mathcal{B}_j$ is always positive definite, so $\mu_j$ and $\lambda_j$ are always positive. Note that if $\lambda$ is an eigenvalue of $\mathcal{B}(q(s))$, then $\frac{\lambda}{1 + t \lambda}$ is an eigenvalue of $\mathcal{B}(q_t(s))$. So we have $\lambda_{j+1} = \frac{\lambda_j}{1 + d_j \lambda_j} + \frac{2 \kappa_{\max}(x_j)}{\cos \phi_j(x)}$ and $\mu_{j+1} = \frac{\mu_j}{1 + d_j \mu_j} + 2 \kappa_{\max}(x_j) \cos \phi_j(x)$. For all $j \geq 0$, $\mu_j(s) \leq k_j(s) \leq \lambda_j(s)$, so we get $k_{j+1}(s) \in$
\begin{equation}
\label{curvature estimate}
\left[ \frac{k_j(s)}{1 + d_j(s) k_j(s)} + 2 \kappa_{\min}(x_j(s)) \cos \phi_j(s), \frac{k_j(s)}{1 + d_j(s) k_j(s)} + \frac{2 \kappa_{\max}(x_j(s))} {\cos \phi_j(s)} \right].
\end{equation}

\section{Coding $M_0$ and $X_0$}

For each $x \in M_0$ we have a bi-infinite sequence of indices $\alpha = \{\alpha_i\}_{i=-\infty}^\infty$, $\alpha_i \in \{1, \ldots , u\}$ such that $\pi B^i x \in \partial K_{\alpha_i}$. Since each $K_i$ is convex, $\alpha_i \neq \alpha_{i+1}$ for all $i$, so define the symbol spaces $\Sigma$ and $\Sigma^+$ as

$$\Sigma = \left \{ (\alpha_i)_{i = -\infty}^\infty : \alpha_i \in \{1, \ldots, u\}, \alpha_i \neq \alpha_{i+1} \mbox{ for all } i \in \mathbb{Z} \right \},$$
$$\Sigma^+ = \left \{ (\alpha_i)_{i = 1}^\infty : \alpha_i \in \{1, \ldots, u\}, \alpha_i \neq \alpha_{i+1} \mbox{ for all } i \leq 0 \right \}.$$

Let $f: M_0 \rightarrow \Sigma, x \mapsto \alpha$ denote the representation map. The two-sided subshift $\sigma: \Sigma \rightarrow \Sigma, \alpha_i \mapsto \alpha_{i+1}$ is continuous under the following metric $d_\theta$ for any $\theta \in (0,1)$.

\begin{displaymath}
d_\theta(\alpha,\beta) = \begin{cases}
       0:        &  \text{if $\alpha_i = \beta_i$ for all $i \in \mathbb{Z}$}  \\
       \theta^n: &  \text{if $n = \max \{j \geq 0: \alpha_i = \beta_i \mbox{ for all } |i| < j\}$},
\end{cases}
\end{displaymath}

We define a similar metric on $\Sigma^+$. 

\begin{displaymath}
d_\theta(\alpha,\beta) = \begin{cases}
       0:        &  \text{if $\alpha_i = \beta_i$ for all $i \geq 0$}  \\
       \theta^n: &  \text{if $n = \max \{j \geq 0: \alpha_i = \beta_i \mbox{ for all } 0 \leq i \leq j\}$},
\end{cases}
\end{displaymath}

\begin{lem}
If $u \geq 2$ and $\theta \in (0,1)$, then $f$ is a homeomorphism of $M_0$ (with the topology induced by $M$) onto $(\Sigma, d_\theta)$, and the shift $\sigma$ is topologically conjugate to $B$, that is $B = f^{-1} \circ \sigma \circ f$.

\end{lem}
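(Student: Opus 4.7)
The plan is to verify four ingredients in turn: that $f$ is well-defined with image in $\Sigma$, that $\sigma \circ f = f \circ B$, that $f$ is bijective, and that both $f$ and $f^{-1}$ are continuous for $d_\theta$. The first two are essentially formal. Well-definedness is built into the construction preceding the lemma, and since each $K_i$ is convex we automatically have $\alpha_i \neq \alpha_{i+1}$, so $f(x) \in \Sigma$. The conjugacy $f(Bx)_i = (\sigma f(x))_i$ is immediate from $\pi B^{i+1} x \in \partial K_{\alpha_{i+1}}$.

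For bijectivity, I would derive surjectivity from Lemma \ref{findorbits}: given $\alpha \in \Sigma$ and $N \geq 1$, the finite admissible word $(\alpha_{-N}, \ldots, \alpha_N)$ can be cyclically closed into an admissible loop, and Lemma \ref{findorbits} then produces a periodic billiard trajectory realising that loop; its central reflection is a point $x^{(N)} \in M$ whose coding agrees with $\alpha$ on $[-N,N]$. Passing to a convergent subsequence (using compactness of the relevant part of $M$ and the boundedness of trajectories forced by $(\textbf{H})$) yields a limit $x \in M_0$ with $f(x) = \alpha$. Injectivity, together with the quantitative diameter estimate needed for continuity of $f^{-1}$, will follow from the hyperbolic expansion in Proposition \ref{q = p delta} together with the curvature bound \eqref{curvature estimate}: two distinct points sharing the same bi-infinite code would have to sit on a common local unstable manifold where $B$ is strictly expanding and on a common local stable manifold where $B^{-1}$ is strictly expanding, forcing equality.

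Continuity of $f$ is straightforward. Hypothesis $(\textbf{H})$ keeps $M_0$ away from tangencies, so each $B^i$ is continuous on $M_0$, and if $x_k \to x$ in $M_0$ then the first $n$ reflections of $x_k$ lie in the same obstacles as those of $x$ for $k$ large, giving $d_\theta(f(x_k), f(x)) \to 0$. Continuity of $f^{-1}$ is the real work: one must show that the cylinder $\{x \in M_0 : f(x)_i = \alpha_i \text{ for } |i| \leq n\}$ has diameter going to $0$ as $n \to \infty$. I would combine the forward contraction on convex fronts with the analogous backward contraction (applied to $B^{-1}$, with stable and unstable roles exchanged), both delivered by Proposition \ref{q = p delta} together with uniform lower bounds on $1 + d_j k_j$, to bound this diameter geometrically in $n$. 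Combined with compactness of $M_0$ and Hausdorffness of $(\Sigma, d_\theta)$, this upgrades the continuous bijection $f$ to a homeomorphism and, together with the conjugacy already noted, completes the proof.

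The main obstacle is the hyperbolic step used both in injectivity and in the continuity of $f^{-1}$: one needs uniform-in-$M_0$ expansion and contraction rates on both stable and unstable fronts, extracted from only the curvature and angle bounds assembled so far. Everything else is routine bookkeeping on top of that estimate together with Lemma \ref{findorbits}.
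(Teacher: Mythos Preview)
The paper does not prove this lemma at all: it is stated without proof as a known background fact (the standard reference is Morita \cite{Morita}, which appears in the bibliography). There is therefore no ``paper's own proof'' to compare against.

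Your outline is the standard argument and is correct in structure. Two minor remarks. First, your appeal to Proposition~\ref{q = p delta} and the bound~\eqref{curvature estimate} for the contraction step is logically sound but forward-references machinery that the paper develops only \emph{after} this lemma; in the usual presentation one simply invokes the uniform hyperbolicity of $B$ on $M_0$ (which the paper asserts in the paragraph immediately following the lemma) rather than the explicit front-propagation formula. Second, in the surjectivity step, closing the word $(\alpha_{-N},\ldots,\alpha_N)$ into an admissible cycle may require inserting one extra symbol when $\alpha_N = \alpha_{-N}$; this is harmless for $u\geq 3$, and for $u=2$ the whole statement is trivial since $\Sigma$ then consists of a single periodic orbit.
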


Assuming $u \geq 3$, $M_0$ is a compact topological Cantor set. $B$ is topologically transitive on $M_0$ and its periodic points are dense in $M_0$. $B$ is hyperbolic on $M_0$, and $M_0$ is a basic set for $B$.

Given the surface $X$, the intersection $X_0 = X \cap \Omega$ can also be coded by sequences. Define the representation map $\Upsilon : X_0 \rightarrow \Sigma^+$ in the same way as $f : M_0 \rightarrow \Sigma$. Define an equivalence relation $\sim_m$ $(m \geq 0)$ by $\alpha \sim_m \beta \Leftrightarrow \alpha_i = \beta_i$ for all $1 \leq i \leq m$, and $\alpha \sim_0 \beta$ for any $\alpha, \beta \in \Sigma^+$. We call the equivalence classes $[\alpha]_m$ \textit{cylinders}. Define another relation (not an equivalence relation) $\approx_m$ by $\alpha \approx_m \beta$ if $\alpha \sim_m \beta$ and $\alpha_{m+1} \neq \beta_{m+1}$.

The following lemma on Hausdorff dimension and packing dimension is the result of direct calculations (see for example \cite{Edgar, Kenny}).

\begin{lem} \label{packing dim lemma}
For any $\alpha \in \Sigma^+$ and $N \in \mathbb{N}$,
$$\overline{\dim_p}([\alpha]_N, d_\theta) = \dim_H([\alpha]_N, d_\theta) = \frac{-\ln(u-1)}{\ln \theta}.$$

\end{lem}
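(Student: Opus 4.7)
The plan is to exploit the self-similar Cantor structure on $(\Sigma^+, d_\theta)$ and run a standard cover/mass-distribution argument. The key preliminary observation is that $d_\theta$ is an ultrametric, so every closed ball of radius $\theta^m$ coincides with a level-$m$ cylinder; in particular $[\alpha]_N$ has diameter $\theta^N$, and for each $k \geq 0$ it decomposes into $(u-1)^k$ pairwise disjoint sub-cylinders $[\beta]_{N+k}$ of diameter $\theta^{N+k}$ (at each of the $k$ further positions one has $u-1$ admissible next symbols). Write $s = -\ln(u-1)/\ln\theta$, so that $\theta^s = (u-1)^{-1}$.

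For the upper bound I would use this decomposition as an explicit cover. The $s$-dimensional Hausdorff sum over the level-$(N+k)$ cover equals $(u-1)^k \theta^{(N+k)s} = \theta^{Ns}$, independent of $k$, so the $s$-dimensional Hausdorff measure of $[\alpha]_N$ is finite and $\dim_H [\alpha]_N \leq s$. The same cover bounds the number of balls of radius $\theta^{N+k}$ needed to cover $[\alpha]_N$ by $(u-1)^k$, so the upper box-counting dimension, and hence $\overline{\dim_p}$, is at most $s$ as well.

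For the matching lower bound on $\dim_H$ I would apply the mass distribution principle. Assign mass $(u-1)^{-k}$ to each level-$(N+k)$ sub-cylinder of $[\alpha]_N$; this is consistent across levels and extends to a Borel probability measure $\mu$ supported on $[\alpha]_N$. Given $x \in [\alpha]_N$ and $0 < r \leq \theta^N$, choose $k \geq 0$ with $\theta^{N+k+1} < r \leq \theta^{N+k}$; since balls are cylinders, $B(x,r)$ lies inside a single level-$(N+k)$ sub-cylinder, and using $\theta^{(N+k+1)s} = (u-1)^{-(N+k+1)}$ we get
\[
\mu(B(x,r)) \leq (u-1)^{-k} = (u-1)^{N+1}\,\theta^{(N+k+1)s} \leq (u-1)^{N+1}\, r^s.
\]
The mass distribution principle then gives $\dim_H [\alpha]_N \geq s$, and combining with $\dim_H \leq \overline{\dim_p} \leq s$ delivers the claimed equalities.

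The argument is largely bookkeeping once the ball/cylinder correspondence is recorded; the only step requiring mild care is correctly propagating the starting level $N$ through both the upper covering sum and the measure estimate, since each cylinder at level $N+k$ has diameter $\theta^{N+k}$ rather than $\theta^k$. I do not anticipate any substantial obstacle, which is consistent with the paper's remark that the result follows by direct calculation.
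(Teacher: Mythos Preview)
Your argument is correct: the ultrametric structure makes balls coincide with cylinders, so a direct covering computation gives the upper bound on both $\dim_H$ and $\overline{\dim_B}$ (hence $\overline{\dim_p}$), and the mass distribution principle with the natural equal-weight measure on sub-cylinders gives the matching lower bound on $\dim_H$. The paper does not actually supply a proof of this lemma; it merely states that the result follows from direct calculations and cites \cite{Edgar, Kenny}. Your write-up is exactly the kind of direct calculation those references carry out, so there is nothing to compare beyond noting that you have spelled out what the paper leaves to the literature.
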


We find upper and lower bounds $g_{\min}$ and $g_{\max}$ such that for some $N \in \mathbb{N}$, $k_j(s) \in [g_{\min},g_{\max}]$ for all $j \geq N$.

\section{Estimating $\delta_j(s)$}

Section 4.1 of \cite{Kenny} contains a significant improvement to the dimension estimate using the continued fraction for $k_j(s)$. We can do the same using the bounds in (\ref{curvature estimate}).

The map $f_{\gamma, \theta}: (0, \infty) \rightarrow \mathbb{R}, x \mapsto \frac{x}{1+ \theta x} + 2 \gamma$ has one positive fixed point $g(\gamma, \theta) = \gamma + \sqrt{\gamma^2 + 2 \gamma / \theta}$. This function is non-decreasing in $\gamma$ and strictly decreasing in $\theta$. 

The natural domain for $g$ is $[\kappa_{\min} \cos \phi^+, \frac{\kappa_{\max}}{\cos \phi^+}] \times [d_{\min}, d_{\max}]$ for the arguments of $g$. On this domain, the minimum and maximum values of $g$ are $g(\kappa_{\min}, d_{\max})$ and $g(\frac{\kappa_{\max}}{\cos \phi^+}, d_{\min})$ respectively. While this domain is an obvious choice, it is not the strictest or most useful domain. We will use a smaller domain $\mathbb{D}$ defined in Section \ref{better est}.

We write $g_{\min} = \displaystyle\max_{(\gamma, \theta) \in \mathbb{D}} g(\gamma, \theta)$ and $g_{\max} = \displaystyle \max_{(\gamma, \theta) \in \mathbb{D}} g(\gamma, \theta)$. The values that maximise and minimise $g$ are denoted $(\gamma_{\max}, \theta_{\min})$ and $(\gamma_{\min}, \theta_{\max})$ respectively. 

Parametrise the surface $X$ by $q(t) = q(t_1,\ldots, t_{D-1})$ where each $t_i \in [0,1]$ and $D$ is the dimension of the billiard. Let $UT(X) = \{(q,\hat{u}): q \in X, \|\hat{u}\| = 1, \hat{u} \mbox{ tangent to $X$ at $q$} \}$ denote the unit tangent bundle of $X$, and parametrise $UT(X)$ by $x(s) = x(t, \hat{u})$, where $s \in [0,1]^{D-1} \times \mathbb{S}^{D-2}$. Consider any $s = (t, \hat{u}) \in S$ such that $q(t) \in X_0$ and any sequences $(\gamma_j, \theta_j)_1^\infty \subset \mathbb{D}$. Let $k_0(s) = \mathcal{B}_0(t)(\hat{u}) \cdot \hat{u}$ be the curvature of $X$ at $q(t)$ in the direction $\hat{u}$, and inductively define $k_{j+1}(s) = f_{\gamma_j, \theta_j} (k_j(s))$ for $0 \leq j \leq n-1$.  

\begin{lem}
Let $a < g_{\min}$ and $b > g_{\max}$. Then there exists $n(X) > 0$ such that for all $s$ and $j \geq n(X)$ we have $k_j(s) \in [a,b]$. 

\begin{proof}
If $k_N(s) \leq g_{\max}$ for some $s$ and some $N \geq 0$ then inductively
$$k_{j+1}(s) = f_{\gamma_j, \theta_j}(k_j(s)) \leq f_{\gamma_{\max}, \theta_{\min}}(k_j(s)) \leq f_{\gamma_{\max}, \theta_{\min}}(g_{\max}) = g_{\max}$$
for all $j \geq N$. Similarly if $k_N(s) \geq g_{\min}$ for some $N$ then $k_j(s) \geq g_{\min}$ for all $j \geq N$. For each $s$, define $k^-_j$ and $k^+_j$ by 
$k^-_0 = k_0, k^-_{j+1} = f_{\gamma_{\min}, \theta_{\max}}(k^-_j)$ and 
$k^+_0 = k_0, k^+_{j+1} = f_{\gamma_{\max}, \theta_{\min}}(k^+_j)$. 
Then for all $j \geq 0$ and $s\in S$ we have $k^-_j(s) \leq k_j(s) \leq k^+_j(s)$, $\displaystyle \lim_{j \rightarrow \infty} k^-_j(s) = g_{\min}$ and $\displaystyle \lim_{j \rightarrow \infty} k^+_j(s) = g_{\max}$. There must be some integer $j_0(s) \geq 0$ such that $k_j(s) \in [a,b]$ for all $j \geq j_0(s)$.

Since $TX$ is compact, $k_0(s)$ has an infimum $k_{0,\min} = k_0(s_{\min})$ and a supremum $k_{0,\max} = k_0(s_{\max})$. Let $n(X) = \max\{j_0(s_{\min}), j_0(s_{\max}) \}$. Then for $j \geq n(X)$,

$$a \leq k_j(s_{\min}) \leq	k_j(s) \leq k_j(s_{\max}) \leq b,$$
so $j_0(s) \leq n(X)$ for all $s \in S$. Thus we have $k_j(s) \in [a,b]$ for all $j \geq n(X)$ as required.
\end{proof}
\end{lem}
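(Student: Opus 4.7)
The plan is to exploit monotonicity of the rational map $f_{\gamma,\theta}(x) = \frac{x}{1+\theta x} + 2\gamma$ in all three of its variables and to sandwich the sequence $k_j(s)$ between two reference sequences whose extreme parameters $(\gamma_{\max},\theta_{\min})$ and $(\gamma_{\min},\theta_{\max})$ have fixed points $g_{\max}$ and $g_{\min}$ respectively. First I would verify the three monotonicity facts: $f_{\gamma,\theta}$ is strictly increasing in $x$ on $(0,\infty)$ (its derivative is $(1+\theta x)^{-2}>0$), non-decreasing in $\gamma$, and non-increasing in $\theta$. This gives, for every $(\gamma,\theta)\in\mathbb{D}$ and every $x>0$,
\[
f_{\gamma_{\min},\theta_{\max}}(x) \;\leq\; f_{\gamma,\theta}(x) \;\leq\; f_{\gamma_{\max},\theta_{\min}}(x).
\]

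Next I would define the comparison sequences with common initial value $k^-_0(s) = k^+_0(s) = k_0(s)$ and update rules $k^-_{j+1}(s) = f_{\gamma_{\min},\theta_{\max}}(k^-_j(s))$, $k^+_{j+1}(s) = f_{\gamma_{\max},\theta_{\min}}(k^+_j(s))$. A straightforward induction combining the three monotonicity facts yields $k^-_j(s) \leq k_j(s) \leq k^+_j(s)$ for all $j \geq 0$. Since $f_{\gamma_{\max},\theta_{\min}}$ is a strictly increasing map with a unique positive attracting fixed point $g_{\max}$ (its graph crosses the diagonal transversely from above for $x<g_{\max}$ and from below for $x>g_{\max}$), the iterates $k^+_j(s)$ converge monotonically to $g_{\max}$; symmetrically $k^-_j(s) \to g_{\min}$. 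Thus for each fixed $s$ there is some $j_0(s)$ with $k_j(s)\in[a,b]$ for $j \geq j_0(s)$.

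The main obstacle will be passing from this pointwise statement to a \emph{uniform} bound $n(X)$ valid for every $s \in S = [0,1]^{D-1}\times\mathbb{S}^{D-2}$. The key observation is that $k^\pm_j(s)$ depends on $s$ only through $k_0(s)$ and that both reference iterations are monotone in their initial value. Since $S$ is compact and $k_0$ is continuous on $S$, it attains a minimum $k_{0,\min}$ at some $s_{\min}$ and a maximum $k_{0,\max}$ at some $s_{\max}$. Setting $n(X) = \max\{j_0(s_{\min}),\,j_0(s_{\max})\}$ and using monotonicity of the extreme iterations in the initial condition together with the sandwich inequality gives
\[
a \;\leq\; k^-_j(s_{\min}) \;\leq\; k^-_j(s) \;\leq\; k_j(s) \;\leq\; k^+_j(s) \;\leq\; k^+_j(s_{\max}) \;\leq\; b
\]
for every $s\in S$ and every $j \geq n(X)$, which is exactly the required uniform conclusion.
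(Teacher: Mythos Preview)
Your proposal is correct and follows essentially the same strategy as the paper: sandwich $k_j(s)$ between the two extreme iterations $k^\pm_j$, use convergence of these to $g_{\min}$ and $g_{\max}$, and then pass to a uniform index via compactness of the parameter space and monotonicity in the initial value. One small point to tighten: as written you define $j_0(s)$ by the condition $k_j(s)\in[a,b]$, but your final chain needs $k^-_j(s_{\min})\geq a$ and $k^+_j(s_{\max})\leq b$, which does not literally follow from that definition; simply take $j_0(s)$ to be the first index after which $k^-_j(s)\geq a$ and $k^+_j(s)\leq b$ (such an index exists by the convergence you already established), and the rest of your argument goes through verbatim.
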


For any $\tau \geq 0$, $n(X) \geq n(S_{\tau} X)$. So by taking a finite number of convex fronts $X_i$ whose image under $S_{\tau}$ covers $\Omega$, we can get a global constant $n_0 = n(a,b) = \max\{n(X_i) : M \subset \bigcup_i X_i \}$ that depends only on $a,b$ and the billiard itself.

Now $k_j(s) \in (a,b)$ for all $s \in q^{-1}(X)$ and $j > n_0$. So for these values,

$$\delta_j(s) \in \left(\frac{1}{1 + d_{\max} b}, \frac{1}{1 + d_{\min} a} \right).$$

Define $\lambda = \frac{1}{1 + d_{\max} b}$ and $\mu = \frac{1}{1 + d_{\min} a}$ for now. For $0 \leq j < n_0$, we can still find bounds for $\delta_j(s)$. $k_j(s)$ is always bounded below by $0$, and we can assume $k_0(s)$ is bounded above by some $k_0^+$ \cite{Sinai}. So $\delta_j(s) \in [\delta^-, 1]$ where $\delta^- = \frac{1}{1 + d_{\max} k_0^+}$. Furthermore, we have $2 \kappa^- \cos \phi^+ \leq k_j(s) \leq \frac{1}{d_{\min}} + \frac{2 \kappa^+}{\cos \phi^+}$ for $1 \leq j < n_0$. Thus, $\delta_j(s) \in [\lambda_0, \mu_0]$ where $\lambda_0^{-1} = 1 + d_{\max}(\frac{1}{d_{\min}} + \frac{2 \kappa^+}{\cos \phi^+})$ and $\mu_0^{-1} = 1 + 2 d_{\min} \kappa^- \cos \phi^+$.

\section{Hausdorff dimension of $X_0$}

\begin{prop}
Let $[a,b] \supset [g_{\min},g_{\max}]$, $\lambda = \frac{1}{1 + d_{\max} b}, \mu = \frac{1}{1 + d_{\min} a}$, and $n_0 = n(a,b)$ as defined above. There exist constants $c,C$ depending only on the billiard, such that for any integer $n \geq n_0$ and $x_1, x_2 \in \hat{X}_0$ such that $x_1 \approx_n x_2$, we have

$$c \lambda^{n-n_0} \leq \|\pi x_1 - \pi x_2 \| \leq C \mu^{n - n_0}.$$

\begin{proof}
Let $n \geq n_0$ and let $x_1, x_2 \in \hat{X}_0$ with $x_1 \approx_n x_2$. Without loss of generality assume $t_n(x_1) < t_n(x_2)$ and let $\tau = t_n(x_2)$. Let $y_1 = S_\tau x_1$, $y_2 = S_\tau x_2$. Now let $p(s)$ parametrize (by arc length) the shortest curve $\Gamma \subset S_\tau X$ between $y_1$ and $y_2$. Let $q(s) = S_{-\tau}(p(s))$ paramatrize the curve $Y = S_{-\tau} \Gamma$. This curve will not be the shortest curve between its endpoints $x_1$ and $x_2$, in fact for large $n$ it can be much longer. We have 

\begin{align*}
\|\pi x_1 - \pi x_2\| &= \left\| \int_Y q'(s) ds \right\| \leq \int_Y \|q'(s)\| ds\\
							&= \int_{\Gamma} \frac{\|p'(s)\|}{1 + (\tau - t_n(s)) k_n(s)} \left( \displaystyle\prod_{j=0}^{n-1} \delta_j(s)\right) ds \\
							&\leq \mu^{n-n_0} \mu_0^{n_0} \int_{\Gamma} ds  \leq C \mu^{n-n_0}.
\end{align*}

Here we used Proposition \ref{q = p delta}, $(\tau - t_n(s)) k_n(s) \geq 0$, $\delta_j(s) < \mu_0$ for $0 \leq j \leq n_0$, $\delta_j(s) < \mu$ for $j > n_0$. Since the curve $\Gamma$ is the shortest curve between two points on a surface with bounded curvature \cite{Sinai}, and confined to a bounded set (e.g. a ball containing $K$), its arc length $\int_{\Gamma} ds$ can be bounded above by a constant.

Now we find an estimate for $\|x_1 - x_2\|$ from below, using different curves. Let $q(s)$ parametrise the shortest curve $Y$ in $X$ between $x_1$ and $x_2$. Now let $[s_1,s_2] \subseteq [0,1]$ such that $s = s_1, s_2$ are the only values for which $(q(s), n(s))$ has an $(n+1)$-st reflection. Let $y_1 = q_{n+1}(s_1)$, $y_2 = q_{n+1}(s_2)$. Without loss of generality assume $t_{n+1}(s_1) < t_{n+1}(s_2)$ and let $\tau = t_{n+1}(s_1)$, $z = S_\tau(q(s_2))$. Then $p(s) = S_\tau q(s)$ parametrizes the curve $S_\tau \hat{Y}$.

We have constants $C_1$ and $C_2$ such that
\begin{align*}
\|\pi x_1 - \pi x_2\| & \geq C_1 \int_X \|q'(s)\| ds \geq C_1 \int_{s_1}^{s_2} \|q'(s)\| ds \\
							& = C_1 \int_{s_1}^{s_2} \frac{\|p'(s)\|}{1 + (\tau - t_n(s)) k_n(s)} \left( \displaystyle\prod_{j=0}^{n-1} \delta_j(s)\right)\\
							& \geq C_1 C_2 \lambda_0^{n_0} \lambda^{n-n_0}  \int_{s_1}^{s_2} \|p'(s)\| ds
\end{align*}

Clearly $z$ is in the convex hull of the two obstacles containing $q_n(s_2)$ and $y_2$ respectively, and $y_1$ is in a third obstacle. Thus we have $\int_{s_1}^{s_2} \|p'(s)\| \geq \|y_1 - z\| \geq b^-$, where $b^-$ is the minimum distance between $K_k$ and Cvx $(K_i \cup K_j)$ for any nonequal $i,j,k$. Letting $c = C_1 C_2 \lambda_0^{n_0} b^-$, we have $c \lambda^{n-n_0} \leq \|\pi x - \pi y \| \leq C \mu^{n - n_0}$ as required.

\end{proof}

\end{prop}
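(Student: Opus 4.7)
The plan is to prove the upper and lower bounds separately, both times using Proposition \ref{q = p delta} to convert between the arc length of a curve on $X$ and the arc length of its image under a long stretch of billiard flow. In each direction I will pick a convenient $C^3$ curve $Y \subset X$ joining $\pi x_1$ to $\pi x_2$, apply the multiplicative formula for $\|q'(s)\|$, and then combine it with the $\delta_j$-bounds established in the previous section: $\delta_j(s) \in [\lambda_0, \mu_0]$ for $0 \leq j \leq n_0$ and $\delta_j(s) \in [\lambda, \mu]$ for $j > n_0$.

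For the upper bound I would, after possibly swapping labels, take $\tau$ just past the $n$-th reflection of both points (say $\tau = t_n(x_2)$) and let $\Gamma$ be a minimizing geodesic on the smooth convex front $S_\tau X$ joining $y_i = S_\tau x_i$. Its pull-back $Y = S_{-\tau}\Gamma$ is a $C^3$ curve from $\pi x_1$ to $\pi x_2$, so $\|\pi x_1 - \pi x_2\|$ is bounded above by the arc length of $Y$. Integrating $\|q'(s)\|$ via Proposition \ref{q = p delta}, discarding the nonnegative term $(\tau - t_n(s))k_n(s)$ in the denominator, and substituting the $\delta_j$-bounds yields an estimate of shape $\mu_0^{n_0}\mu^{n-n_0}\,\mathrm{length}(\Gamma)$. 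Because $\Gamma$ lies in a bounded region on a convex front whose curvature we have already bounded, $\mathrm{length}(\Gamma)$ is itself bounded by a constant, giving the desired $C\mu^{n-n_0}$.

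For the lower bound the strategy is symmetric but requires extra care to extract the no-eclipse constant $b^-$. I would take $Y$ to be the shortest curve on $X$ from $\pi x_1$ to $\pi x_2$; since $X$ has bounded curvature and the two endpoints are close (by the upper bound just proved), $\mathrm{length}(Y)$ is bounded above by $C_1^{-1}\|\pi x_1 - \pi x_2\|$ for some $C_1 > 0$. Because $x_1 \approx_n x_2$, the $(n+1)$-st reflection point jumps from one obstacle to another as we traverse $Y$, so I can isolate a subinterval $[s_1, s_2]$ on which this transition occurs at the endpoints only. Pushing $Y|_{[s_1,s_2]}$ forward by $\tau = t_{n+1}(s_1)$ gives a curve whose endpoints lie in two different obstacles separated by a third obstacle's convex hull neighbourhood; the no-eclipse condition then forces the image length to be at least $b^-$. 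Applying Proposition \ref{q = p delta} on $[s_1, s_2]$ in reverse and bounding the $\delta_j$'s below by $\lambda_0^{n_0}\lambda^{n-n_0}$ yields the claimed $c\lambda^{n-n_0}$.

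The main technical obstacle I anticipate is the lower bound, specifically justifying the existence of the subinterval $[s_1, s_2]$ and that its pushed-forward length is at least $b^-$; this is where the no-eclipse hypothesis is used in a subtle way, since one must argue that distinct symbolic codes at position $n+1$ translate to a genuine geometric gap. The comparison constant $C_1$ between arc length on $X$ and Euclidean distance is less serious but still relies on bounded curvature of $X$ together with the fact that $\pi x_1$ and $\pi x_2$ are close, which in turn requires the upper bound to be in hand when performing the lower bound estimate.
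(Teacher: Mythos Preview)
Your proposal is correct and follows essentially the same route as the paper: the upper bound via the geodesic on the forward front $S_\tau X$ pulled back through Proposition~\ref{q = p delta}, and the lower bound via the shortest curve on $X$, a subinterval $[s_1,s_2]$ capturing the symbolic split at level $n+1$, and the no-eclipse constant $b^-$ to bound the pushed-forward length from below. The only minor difference is that you explicitly note the logical dependence of the constant $C_1$ on the upper bound (closeness of $\pi x_1,\pi x_2$), whereas the paper simply invokes a global comparison constant between chord length and arc length on $X$ coming from bounded curvature; both are valid and lead to the same estimate.
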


\begin{prop}

Let $0 < n_0 \leq n$. Suppose there are constants $c,C > 0$ such that $c \lambda^{n - n_0} \leq \| \pi x - \pi y \| \leq C \mu^{n-n_0}$ whenever $x,y \in \hat{Y_0}$ with $x \approx_n y$. Then $\Upsilon: \hat{Y}_0 \rightarrow \Sigma^+$ is injective and a Lipschitz homeomorphism from $\hat{Y_0}$ to the metric space $(\Upsilon(\hat{Y}_0), d_\lambda)$, and $\Upsilon^{-1}$ is a Lipschitz homeomorphism from $(\Upsilon(\hat{Y_0}), d_\mu)$ onto $\hat{Y_0}$.

\begin{proof}
For any $x \in X_0$ with sufficiently large $n \geq n_0$, there is some $z \in X_0$ such that $z \approx_n x$, so if $\Upsilon(x) = \Upsilon(y)$ then $\|x - y\| \leq \|x - z\| + \|y -z\| \leq 2 C \mu^n \rightarrow 0$ as $n \rightarrow \infty$. So $\Upsilon^{-1}$ is well defined and $\Upsilon$ is injective.

Let $x \approx_n y \in X_0$. Then $d_{\lambda}(\Upsilon x, \Upsilon y) = \lambda^n \leq \frac{1}{c} \|x - y\|$, so $\Upsilon$ is Lipschitz.

Similarly, for distinct $\alpha, \beta \in \Upsilon(X_0)$, $x \in \Upsilon^{-1}(\alpha)$, $y \in \Upsilon^{-1}(\beta)$, and $n$ such that $x \approx_n y \in X_0$, we have $\|\Upsilon^{-1}(\alpha) - \Upsilon^{-1}(\beta)\| \leq C \mu^n = C d_\mu(\alpha, \beta)$. Finally, since the identity $I: (\Upsilon(X_0),d_{\lambda}) \rightarrow (\Upsilon(X_0), d_{\mu})$ is continuous, the maps $\Upsilon: X_0 \rightarrow (\Upsilon(X_0), d_{\mu})$ and $\Upsilon^{-1}:(\Upsilon(X_0), d_{\lambda}) \rightarrow X_0$ are also continuous.

\end{proof}

\end{prop}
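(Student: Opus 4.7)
The plan is to translate the bilateral geometric bound directly into Lipschitz estimates for $\Upsilon$ and $\Upsilon^{-1}$ by reading off the integer $n$ from the symbolic metric, then to close the topological loop via continuity of the identity map between $d_\lambda$ and $d_\mu$.

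For injectivity, suppose $\Upsilon(x)=\Upsilon(y)$ for $x,y\in\hat{Y}_0$, so $x\sim_n y$ for every $n$. Since $\hat{Y}_0$ sits inside the non-wandering set and $u\geq 3$ (giving a Cantor-type branching at every level of the coding), I can find, for each large $n$, a witness $z\in\hat{Y}_0$ with $z\approx_n x$; automatically $z\approx_n y$ too. The upper bound then gives $\|\pi x-\pi y\|\leq\|\pi x-\pi z\|+\|\pi y-\pi z\|\leq 2C\mu^{n-n_0}\to 0$, so $\pi x=\pi y$. Since points of $\hat{Y}_0$ have their velocity component determined by the outer normal of $X$, this forces $x=y$.

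For the Lipschitz bounds themselves, take distinct $x,y\in\hat{Y}_0$ and let $n=n(x,y)$ be the largest integer with $x\sim_n y$, so that $x\approx_n y$. By hypothesis $c\lambda^{n-n_0}\leq\|\pi x-\pi y\|\leq C\mu^{n-n_0}$, while by definition $d_\lambda(\Upsilon x,\Upsilon y)=\lambda^n$ and $d_\mu(\Upsilon x,\Upsilon y)=\mu^n$. Rearranging,
\[
d_\lambda(\Upsilon x,\Upsilon y)\leq\tfrac{\lambda^{n_0}}{c}\,\|\pi x-\pi y\|,\qquad \|\pi x-\pi y\|\leq C\mu^{-n_0}\,d_\mu(\Upsilon x,\Upsilon y),
\]
which shows that $\Upsilon\colon\hat{Y}_0\to(\Upsilon(\hat{Y}_0),d_\lambda)$ and $\Upsilon^{-1}\colon(\Upsilon(\hat{Y}_0),d_\mu)\to\hat{Y}_0$ are Lipschitz with the stated constants. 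To promote each Lipschitz map to a homeomorphism in its specified metric, observe that $\lambda<\mu<1$ implies $d_\lambda\leq d_\mu^{\ln\lambda/\ln\mu}$, so the identity $I\colon(\Upsilon(\hat{Y}_0),d_\lambda)\to(\Upsilon(\hat{Y}_0),d_\mu)$ is (H\"older) continuous. Then $\Upsilon^{-1}$ on $d_\lambda$ factors as the Lipschitz $\Upsilon^{-1}$ on $d_\mu$ postcomposed with $I$, and symmetrically $\Upsilon$ on $d_\mu$ factors through $I$; both compositions are continuous, which yields the two homeomorphism claims.

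The main obstacle is really only the injectivity step, where I need the existence of the witnesses $z\approx_n x$; this is the one place where the basic-set / Cantor structure of $M_0$ enters, and for a cleanly worded proof it may be worth phrasing it as: given any admissible finite word $\alpha_0\ldots\alpha_n$, the corresponding cylinder in $\hat{Y}_0$ is nonempty, so extensions branching at level $n+1$ exist. The remaining steps are mechanical manipulations of the two-sided bound and a tidy appeal to the continuity of the identity between the two symbolic metrics.
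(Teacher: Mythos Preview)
Your proof is correct and follows essentially the same route as the paper: the same witness-$z$ triangle-inequality argument for injectivity, the same direct reading of the two-sided bound to get Lipschitz estimates for $\Upsilon$ (into $d_\lambda$) and $\Upsilon^{-1}$ (from $d_\mu$), and the same appeal to continuity of the identity between $d_\lambda$ and $d_\mu$ to close the homeomorphism claims. You are in fact slightly more careful than the paper in tracking the $\lambda^{n_0}$ and $\mu^{-n_0}$ factors in the Lipschitz constants and in noting why $\pi x=\pi y$ forces $x=y$; your observation that the witnesses $z\approx_n x$ come from the Cantor/branching structure of the coding is exactly the missing justification the paper leaves implicit. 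One cosmetic slip: in your factorisation you mean \emph{precomposed} with $I$, not postcomposed.
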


The following theorem is well known (see \cite{FractalGeometry})
\begin{thm}
Let $f: A \rightarrow B$ be a Lipschitz map and let $F \subset A$. Then $\dim_H f(F) \leq \dim_H F$.
\end{thm}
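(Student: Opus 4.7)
The plan is to work directly from the definitions of Hausdorff measure and Hausdorff dimension; this is a textbook argument, so I would keep it brief. First I would recall that for $s \geq 0$ and $\delta > 0$, the $s$-dimensional $\delta$-approximating Hausdorff measure is $\mathcal{H}^s_\delta(E) = \inf \sum_i (\mathrm{diam}\, U_i)^s$, where the infimum ranges over countable covers $\{U_i\}$ of $E$ by sets of diameter at most $\delta$. Then $\mathcal{H}^s(E) = \lim_{\delta \to 0^+} \mathcal{H}^s_\delta(E)$, and $\dim_H E = \inf\{s \geq 0 : \mathcal{H}^s(E) = 0\}$.

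The central observation, which does all the work, is that if $f$ is Lipschitz with constant $L$, then $\mathrm{diam}\, f(F \cap U) \leq L \cdot \mathrm{diam}\, U$ for every $U$. Thus, given any $\delta$-cover $\{U_i\}$ of $F$, the family $\{f(F \cap U_i)\}$ is an $(L\delta)$-cover of $f(F)$, and summing gives
$$\sum_i (\mathrm{diam}\, f(F \cap U_i))^s \;\leq\; L^s \sum_i (\mathrm{diam}\, U_i)^s.$$
Taking the infimum over $\delta$-covers of $F$ yields $\mathcal{H}^s_{L\delta}(f(F)) \leq L^s \mathcal{H}^s_\delta(F)$, and letting $\delta \to 0^+$ produces $\mathcal{H}^s(f(F)) \leq L^s \mathcal{H}^s(F)$.

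The conclusion is then immediate. For any $s > \dim_H F$ we have $\mathcal{H}^s(F) = 0$, so the displayed inequality gives $\mathcal{H}^s(f(F)) = 0$, whence $\dim_H f(F) \leq s$. Taking the infimum over all such $s$ gives $\dim_H f(F) \leq \dim_H F$.

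There is really no obstacle in this argument; the only point worth flagging is that $A$ and $B$ must be metric spaces so that diameters are defined, which is harmless in the application (both sit inside Euclidean space or a symbol space with the metric $d_\theta$). In the paper this theorem is invoked as a black box, so a one-line reference to \cite{FractalGeometry} in place of the full proof would also be appropriate.
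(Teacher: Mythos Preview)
Your argument is correct and is exactly the standard proof found in Falconer's \cite{FractalGeometry}. The paper itself does not prove this theorem at all; it simply quotes it as well known with a reference to \cite{FractalGeometry}, so your closing remark that a one-line citation would suffice is precisely what the paper does.
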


For some $\alpha \in \Sigma^+$ and sufficiently large $n \geq n_0$ the cylinder $[\alpha]_n \subset \Upsilon(\hat{Y}_0)$.
It follows that $\dim_H (\Upsilon(\hat{Y}_0), d_\lambda) \leq \dim_H \hat{Y}_0 \leq \dim_H(\Upsilon(\hat{Y}_0), d_\mu)$.

\section{Hausdorff dimension of $M_0$}

We now relate $\dim_H X_0$ to $\dim_H M_0$. Let $x \in M_0$ and let $\hat{X} = S_{\tau}(W_\theta^{(u)}(x))$ be the image of the local unstable manifold $W_\theta^{(u)}(x)$ under $S_t$. Let $X_0 = X \cap M_0$. Define $d^{(s)} = \dim_H(W_\theta^{(s)}(x) \cap M_0)$ and $d^{(u)} = \dim_H(W_\theta^{(u)}(x) \cap M_0)$. Then using Lemma \ref{packing dim lemma}, we get

$$d^{(u)} = \dim_H X_0 \in [\frac{-\ln(u-1)}{\ln \lambda}, \frac{-\ln(u-1)}{\ln \mu}].$$

We can use the same estimate for $d^{(s)}$, since $W^{(u)}_{\theta} = \mbox{Refl} W^{(s)}(\mbox{Refl}(x))$, where Refl: $\hat{Q} \rightarrow \hat{Q}$ is a bi-Lipschitz involution given by

\begin{displaymath}
   \mbox{Refl}(q,v) = \left\{
     \begin{array}{lr}
       (q, -v) & \mbox{ for } q \in \mbox{int} Q\\
       (q, 2 \langle n_K(q), v \rangle n_K(q) - v \rangle), & \mbox{ for } q \in \partial K.
     \end{array}
   \right.
\end{displaymath} 

If $E,F$ are Borel sets, the following inequalities are well known (see \cite{FractalGeometry}).

$$\dim_H E + \dim_H F \leq \dim_H(E \times F) \leq \dim_H E + \overline{\dim_p} F.$$

Lemma \ref{packing dim lemma} gives $\overline{\dim_p}(\Sigma^+, d_{\theta}) = \dim_H(\Sigma^+, d_\theta)$. Let $V$ be a neighbourhood of $M_0$ and let $U \subset V$ be a neighbourhood of $x$. Let $\varepsilon$ be small enough that $W_{\varepsilon}^{(u)}(x), W_{\varepsilon}^{(s)}(x) \subset U$, and let $h : W_{\varepsilon}^{(u)}(x) \times W_{\varepsilon}^{(s)}(x) \rightarrow R$ be the usual local product map, where $R$ is an open neighbourhood of $x$. This holonomy is at least H\"{o}lder continuous. Let $\alpha$ be the H\"{o}lder constant of $h$, then using basic properties of Hausdorff dimension \cite{FractalGeometry} we have
\begin{equation}
\label{No HS inequality}
\alpha (d^{(s)} + d^{(u)}) \leq \dim_H (R \cap M_0) \leq \alpha^{-1} (d^{(s)} + d^{(u)}).
\end{equation}

If $\alpha = 1$ we have 
\begin{equation} \label{HS equation}
\dim_H (R \cap M_0) = d^{(s)} + d^{(u)}.
\end{equation}

\begin{thm} 
\label{main theorem}
Let $\lambda_1 = \frac{1}{1 + d_{\max} g_{\max}}, \mu_1 = \frac{1}{1 + d_{\min} g_{\min}}$. Assume that $\alpha = 1$. Then

\begin{equation}
\label{good dimension estimate}
\frac{-2 \ln(u-1)}{\ln \lambda_1} \leq \dim_H M_0 \leq \frac{-2 \ln(u-1)}{\ln \mu_1}.
\end{equation}

\begin{proof}
For any $a < g_{\min}, b > g_{\max}$, letting $\lambda(b) = \frac{1}{1 + d_{\max} b}, \mu(a) = \frac{1}{1 + d_{\min} a}$ we have 
$$\dim_H M_0 = \dim_H (R \cap M_0) = d^{(s)} + d^{(u)} \in [\frac{-2 \ln(u-1)}{\ln \lambda(b)}, \frac{-2 \ln(u-1)}{\ln \mu(a)}].$$ 

Taking limits $a \rightarrow g_{\min}$ and $b \rightarrow g_{\max}$, we get the result.
\end{proof}

\end{thm}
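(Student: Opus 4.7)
The plan is to combine the dimension estimates for local unstable manifolds from the previous section with the product structure afforded by (\ref{HS equation}), and then pass to a limit. First I would fix any $a < g_{\min}$ and $b > g_{\max}$ and set $\lambda(b) = (1 + d_{\max} b)^{-1}$, $\mu(a) = (1 + d_{\min} a)^{-1}$. The bracketing $d^{(u)} \in [-\ln(u-1)/\ln\lambda(b),\ -\ln(u-1)/\ln\mu(a)]$ is already established by Lipschitz-conjugating $\hat{X}_0$ onto a cylinder in $(\Sigma^+, d_\theta)$ via $\Upsilon$ and applying Lemma \ref{packing dim lemma}. For $d^{(s)}$ the same bracket holds because the reflection involution $\mathrm{Refl}$ is bi-Lipschitz and conjugates stable manifolds to unstable ones at $\mathrm{Refl}(x)$.

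Next I would invoke the hypothesis $\alpha = 1$ to apply (\ref{HS equation}), yielding
$$\dim_H(R \cap M_0) = d^{(s)} + d^{(u)} \in \left[\frac{-2 \ln(u-1)}{\ln \lambda(b)},\ \frac{-2 \ln(u-1)}{\ln \mu(a)}\right].$$
Since $M_0$ is a compact topologically transitive basic set covered by finitely many local product rectangles $R$, the global Hausdorff dimension $\dim_H M_0$ agrees with the common local value $\dim_H(R \cap M_0)$ and hence satisfies the same two-sided bracket.

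Finally, the maps $b \mapsto \lambda(b)$ and $a \mapsto \mu(a)$ are continuous, and $\ln$ is continuous and non-zero at $\lambda_1, \mu_1 \in (0,1)$, so letting $a \uparrow g_{\min}$ and $b \downarrow g_{\max}$ the bracket endpoints converge to $-2\ln(u-1)/\ln \lambda_1$ and $-2\ln(u-1)/\ln \mu_1$ respectively. Because $\dim_H M_0$ is a single number lying in every bracket along this approach, it lies in their intersection, which is the desired interval. The main point requiring care is the local-to-global step: one must verify that $\dim_H(R \cap M_0) = \dim_H M_0$, for which I would appeal to compactness of $M_0$, finite covering by product rectangles, and the uniformity of the constants $c, C$ supplied by the preceding proposition across these rectangles. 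Everything else reduces to a direct continuity/limiting argument from the machinery already assembled.
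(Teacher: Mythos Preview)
Your proposal is correct and follows essentially the same route as the paper: fix $a<g_{\min}$, $b>g_{\max}$, combine the brackets for $d^{(u)}$ and $d^{(s)}$ via the product formula (\ref{HS equation}) under the hypothesis $\alpha=1$, identify $\dim_H(R\cap M_0)$ with $\dim_H M_0$, and then pass to the limit $a\to g_{\min}$, $b\to g_{\max}$. The only difference is that you make explicit the local-to-global identification and the continuity of the bracket endpoints, both of which the paper simply asserts in a single displayed line.
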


\section{Dimension product structure}
\label{DPS}

In this section we discuss what is currently known about the holonomy $h$. The holonomy is always Lipshitz if the diffeomorphism $B$ is \textit{conformal} on both the stable and unstable manifolds (see \cite{Barreira1} and \S7 of \cite{Pesin}). This is the case for the billiard ball map $B$ in $\mathbb{R}^2$ but not in higher dimensions. To see this, suppose one of the obstacles is the unit sphere centered on the origin, and consider an unstable manifold containing the points $(0,0,10)$, $(\frac{1}{2},0,10)$, $(0,\frac{1}{2},10)$, each with a ray in a direction sufficiently close to $(0,0,-1)$ that the rays collide with the sphere. These points form a right angle, but their image under $B$ does not, so $B$ does not always preserve angles on unstable manifolds and is not conformal.

However Stoyanov in \cite{Non-integrability} showed that a class of billiards satisfy a pinching condition, which would imply the stable and unstable manifolds are $C^1$. In the notation of this paper, a billiard satisfies the pinching condition if $\lambda_0^{d_{\max}} < \mu_0^{2 d_{\min}}$, where $\lambda_0^{-1} = 1 + d_{\max}(\frac{1}{d_{\min}} + \frac{2 \kappa^+}{\cos \phi^+})$ and $\mu_0^{-1} = 1 + 2 d_{\min} \kappa^- \cos \phi^+$. In fact we will show that it holds when $\lambda(a)^{d_{\max}} < \mu(b)^{2 d_{\min}}$.

Hasselblatt and Schmeling in \cite{HSconjecture} proposed the conjecture that equation (\ref{HS equation}) holds generically or under mild hypotheses, even for non-conformal diffeomorphisms and non-Lipschitz holonomies. They proved this conjecture for a class of Smale solonoids. If the conjecture is shown to be true, at least in the case of dynamical billiards, then we recover the equation (\ref{good dimension estimate}). If not, then the result still holds for the class of billiards in \cite{Non-integrability}. We now calculate the constant $\alpha$ to get an estimate in terms of constants related to the billiard.

\section{Calculating the H\"{o}lder constant}
\label{Holder const}

A combination of arguments from \cite{Non-integrability, Hassleblatt} and Section \ref{better est} can be used to calculate the H\"{o}lder constant $\alpha$ for the holonomies. The open billiard flow $S_t$ is an example of an Axiom A flow, with hyperbolic splitting into $TM = E^{su} \oplus E^{ss} \oplus E^{S}$. These are the strong stable manifold, strong unstable manifold and the direction of the flow $S$ respectively. That is, for some $0 < \eta < 1$ we have $\|d S_t(u)\| \leq C \eta^t \|u\|$ for all $u \in E^s(t)$ and $t \geq 0$, and $\|d S_t(u)\| \leq C \eta^{-t} \|u\|$ for all $u \in E^u(t)$ and $t \leq 0$.

For each point $x$ there exist $\alpha_x < \beta_x < 0 < \alpha'_x < \beta_x$ such that for $v \in E^{ss}(x)$, $u \in E^{su}(x)$ and $t > 0$ we have
$$\frac{1}{C} e^{\alpha_x t} \|v\| \leq \|d S_t(x) \cdot u\| \leq C e^{\beta_x t} \|v\|, \mbox{ and}$$ 
$$\frac{1}{C} e^{-\alpha'_x t} \|u\| \leq \|d S_{-t}(x) \cdot u\| \leq C e^{-\beta'_x t} \|u\|.$$

In the case of billiards, the reflection property $W^{(u)}_{\theta} = \mbox{Refl} W^{(s)}(\mbox{Refl}(x))$ implies that $\alpha_x = -\alpha'_x$ and $\beta_x = -\beta'_x$. The H\"{o}lder constant $\alpha$ is then given by the \textit{bunching constant} $\alpha = B^u(S) = \displaystyle \inf_{x \in M_0} \frac{\beta_x - \beta'_x} {\alpha_x} = \inf_{x \in M_0} \frac{2 \beta_x}{\alpha_x}$ \cite{Hassleblatt}. The system is said to satisfy the \textit{pinching condition} if there exist $0 < \alpha_0 \leq \beta_0$ such that $0 \leq \alpha_0 \leq \alpha'_x \leq \beta'_x \leq \beta_0$ and $2 \alpha_x - \beta_x \geq \alpha_0$ for all $x \in M_0$.

Let $\hat{X} = S_{\tau}(W^{(u)}_{\theta}(x))$ for some small $\tau$, let $t > d_1(x) + \ldots + d_n(x)$ and let $\delta_j(s)$ be defined as in section \ref{deltature}. Then from \cite{Non-integrability}, there are constants $c_1, c_2$ such that
\begin{align*}
\frac{c_1}{c_2} \frac{\|u\|}{\delta_1(0) \delta_2(0) \ldots \delta_n(0)} &\leq \|d S_t(x) \cdot u\| \leq \frac{c_2}{c_1} \frac{\|u\|}{\delta_1(0) \delta_2(0) \ldots \delta_n(0)}\\
\frac{c_1}{c_2} \frac{\|u\|}{\mu_0^{n_0} \mu^{n-n_0}} &\leq \|d S_t(x) \cdot u\| \leq \frac{c_2}{c_1} \frac{\|u\|}{\lambda_0^{n_0} \lambda^{n-n_0}}.\\
\frac{c_1}{c_2} \left(\frac{\mu}{\mu_0} \right)^{n_0} \mu^{-t / d_{\max}} \|u\| &\leq \|d S_t(x) \cdot u\| \leq \frac{c_2}{c_1} \left (\frac{\lambda}{\lambda_0} \right)^{n_0} \lambda^{-t / d_{\min}} \|u\| \\
A e^{-t \ln \mu / d_{\max}} \|u\| &\leq \|d S_t(x) \cdot u\| \leq B e^{- t \ln \lambda / d_{\min}} \|u\|,
\end{align*}

\noindent where $\lambda = \lambda(b) = \frac{1}{1+ d_{\max} b}$, $\mu = \mu(a) = \frac{1}{1 + d_{\min} a}$, while $A = A(a,b)$ and $B = B(a,b)$ are new global constants that exist for all $a < g_{\min}, b > g_{\max}$ (these are not necessarily bounded above). This inequality holds for all $t \geq t_0$ with $t_0$ sufficiently large that $m > n_0$, but there must be constants $A'$ and $B'$ such that the same inequality holds for all $0 < t \leq t_0$. Taking $C$ large enough that $C > \max\{B,B'\}$ and $\frac{1}{C} < \min\{A,A'\}$, we now have $\alpha_x = -\ln \mu / d_{\max}$ and $\beta_x = -\ln \lambda/ d_{\min}$ so the bunching constant is $\displaystyle B^u(S) = \frac{2 d_{\min} \ln \mu}{d_{\max} \ln \lambda}$. This argument improves Proposition 1.2 in \cite{Non-integrability} by replacing $[\mu_0,\lambda_0]$ with the smaller interval $[\mu, \lambda]$ for any $a < g_{\min}, b > g_{\max}$.

\begin{prop}
Let $a < g_{\min}, b > g_{\max}$. Assume that $\lambda(b)^{d_{\max}} < \mu(a)^{2 d_{\min}}$ and the boundary $\partial K$ is $C^3$. Then the open billiard flow in the exterior of $K$ satisfies the pinching condition on its non-wandering set $M_0$. For any $x \in M_0$ we can choose $\alpha_x = \alpha_0 = \frac{\ln \mu(a)}{d_{\max}}$ and $\beta_x = \beta_0 = \frac{\ln \lambda(b)}{d_{\min}}$. 
\end{prop}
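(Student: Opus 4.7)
The plan is to assemble the estimates derived in the paragraphs immediately preceding the proposition and then verify that the hypothesis $\lambda(b)^{d_{\max}} < \mu(a)^{2 d_{\min}}$ is exactly the algebraic condition needed for the pinching definition to be satisfied with these exponents. The bulk of the hyperbolic estimate has already been carried out in the text above the statement: Proposition \ref{q = p delta}, combined with the bounds $\delta_j \in [\lambda(b), \mu(a)]$ for $j > n_0$ and $\delta_j \in [\lambda_0, \mu_0]$ for $j \leq n_0$, produces global constants $A = A(a,b)$ and $B = B(a,b)$ with
\[
A\, e^{-t \ln \mu(a) / d_{\max}} \|u\| \;\leq\; \|dS_t(x) \cdot u\| \;\leq\; B\, e^{-t \ln \lambda(b) / d_{\min}} \|u\|
\]
uniformly for $u \in E^{su}(x)$, $x \in M_0$ and $t \geq 0$. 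This already identifies uniform strong-unstable rates $\alpha'_x = -\ln\mu(a)/d_{\max}$ and $\beta'_x = -\ln\lambda(b)/d_{\min}$ that are independent of $x$.

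Next I would transfer these estimates to the strong-stable direction using the reflection involution. The map $\mathrm{Refl}$ is bi-Lipschitz and intertwines $S_t$ with $S_{-t}$; combined with the identity $W^{(u)}_\theta(x) = \mathrm{Refl}(W^{(s)}_\theta(\mathrm{Refl}(x)))$ and $\mathrm{Refl}(M_0) = M_0$, this lets us apply the above inequality at $\mathrm{Refl}(x)$ and push it back to obtain a uniform decay estimate on $E^{ss}(x)$ with exponents $\alpha_x = \ln\mu(a)/d_{\max}$ and $\beta_x = \ln\lambda(b)/d_{\min}$, both independent of $x$. I would then set $\alpha_0$ and $\beta_0$ to these two common constant values, which is admissible precisely because they do not depend on the base point.

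To verify the bunching inequality required by the pinching condition, I would plug in and reduce $2\alpha_x - \beta_x > 0$ to
\[
d_{\max}\,|\ln \lambda(b)| \;>\; 2 d_{\min}\,|\ln \mu(a)|,
\]
which, because $\lambda(b), \mu(a) \in (0,1)$, is equivalent to $\lambda(b)^{d_{\max}} < \mu(a)^{2 d_{\min}}$ --- precisely the standing hypothesis. The strict excess it provides yields a positive $\alpha_0$ with $2\alpha_x - \beta_x \geq \alpha_0$, completing the pinching check. The $C^3$ hypothesis on $\partial K$ enters here to guarantee that the shape operator $\mathcal{B}$ and its iterates under the convex-front evolution are $C^1$, which is what ultimately underwrites the uniform smooth control used in Proposition \ref{q = p delta} and the $\delta_j$ estimates.

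The step I expect to be most delicate is not the algebra but the verification that the prefactors $A, B$ can be chosen uniformly in $x \in M_0$, including on the short-time regime $t \leq t_0$ before the curvature iterates $k_j$ settle into $[g_{\min}, g_{\max}]$. That uniformity rests on the compactness of $M_0$, a Sinai-type upper bound for $k_0$, and the finiteness of the global integer $n_0 = n(a,b)$ obtained by covering $\Omega$ with finitely many convex fronts as in the lemma preceding this section. Once those ingredients are in hand, the remainder of the proposition follows from the elementary log-manipulation above.
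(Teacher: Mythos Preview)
Your proposal is correct and follows essentially the same route as the paper. In fact the paper does not supply a separate proof environment for this proposition at all: the proposition is stated as the summary of the computation carried out in the paragraphs immediately above it, where the chain of inequalities
\[
A\, e^{-t \ln \mu / d_{\max}} \|u\| \leq \|dS_t(x)\cdot u\| \leq B\, e^{-t \ln \lambda / d_{\min}} \|u\|
\]
is derived from Proposition~\ref{q = p delta} and the $\delta_j$ bounds, and the exponents $\alpha_x,\beta_x$ are read off. Your reconstruction of that argument, the use of $\mathrm{Refl}$ to pass between stable and unstable directions, and the algebraic reduction of $2\alpha_x - \beta_x > 0$ to the hypothesis $\lambda(b)^{d_{\max}} < \mu(a)^{2 d_{\min}}$ are exactly what the paper intends; your remarks on the uniformity of $A,B$ via compactness of $M_0$ and the global $n_0$ make explicit a point the paper leaves implicit.
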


We cannot take the limit as $a \rightarrow g_{\min}, b \rightarrow g_{\max}$ for this proposition, since the constants $A$ and $B$ may not be bounded above. However when $\lambda^{d_{\max}} < \mu^{2 d_{\min}}$ we have $\alpha = 1$ so equations (\ref{HS equation}) and (\ref{good dimension estimate}) hold. Taking limits we can extend this to $\lambda_1^{d_{\max}} < \mu_1^{2 d_{\min}}$, which proves part 2 of the main theorem. If (\ref{HS equation}) does not hold then we have the following general estimate using (\ref{No HS inequality}):

\begin{equation}
-\frac{4 d_{\min} \ln \mu_1 \ln(u-1)}{d_{\max} (\ln \lambda_1)^2} \leq \dim_H M_0 \leq -\frac{d_{\max} \ln \lambda_1 \ln(u-1)}{d_{\min} (\ln \mu_1)^2}.
\end{equation}

\section{Improvement of estimates}
\label{better est}

\subsection{Convex hull conjecture}
\label{Convex hull conjecture}

We propose a conjecture that restricts the non-wandering set to a smaller area. This allows some relaxation of conditions. 

\begin{defn}
For any $i \neq j$, let $(p_{ij}, p_{ji}) \in K_i \times K_j$ denote the minimum of $F: K_i \times K_j \rightarrow \mathbb{R}, (q_1,q_2) \mapsto \|q_1 - q_2\|$. Then each $p_{ij}$ is on the boundary $\partial K_i$ and the vector $p_{ji} - p_{ij}$ is normal to $\partial K_i$ at $p_{ij}$. 
\end{defn}

\begin{conj}
Denote the convex hull Cvx$\{p_{ij}: 1 \leq i, j \leq n, i \neq j \}$ by $H$. Let $1 \leq \alpha_1, \ldots , \alpha_n \leq u$ $(n \geq 3)$ be a finite sequence of indices and let $(q_1, \ldots, q_n)$ be a periodic billiard trajectory such that $q_j \in K_{\alpha_k}$ for each $j$. Then each $q_j$ is contained in $H$. Furthermore, the non-wandering set $M_0$ is contained in $H$.

\end{conj}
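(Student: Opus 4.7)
The plan is to reduce to periodic orbits by density and then analyse periodic orbits using Lemma~\ref{findorbits} together with the reflection law and hyperplane separation. Since $M_0$ is a basic hyperbolic set for $B$, the periodic orbits of $B$ are dense in $M_0$; since $H$ is closed and convex, it suffices to show that every periodic billiard trajectory has its reflection points in $H$, and the claim for $M_0$ then follows by passing to limits.

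Fix a periodic orbit $(q_1,\ldots,q_n)$ and suppose for contradiction that $q_k \notin H$ for some $k$. The hyperplane separation theorem applied to the closed convex set $H$ and the point $q_k$ yields a unit vector $\eta$ and $c \in \mathbb{R}$ with $\langle q_k,\eta\rangle > c$ and $\langle p_{ij},\eta\rangle \leq c$ for every $i \neq j$. Without loss of generality, take $k$ to maximise $\langle q_\bullet,\eta\rangle$ over the orbit, set $j=\alpha_k$, and define $\hat{w}_\pm = (q_{k\pm 1} - q_k)/\|q_{k\pm 1}-q_k\|$. The billiard reflection law implies that $n(q_k)$ is a positive scalar multiple of $\hat{w}_- + \hat{w}_+$. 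The maximality of $\langle q_k,\eta\rangle$ gives $\langle \hat{w}_\pm,\eta\rangle \leq 0$, and hence $\langle n(q_k),\eta\rangle \leq 0$.

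The crux is then to turn the conjunction ``$\langle q_k,\eta\rangle > c$ and $\langle n(q_k),\eta\rangle \leq 0$'' into a contradiction. The second condition confines $q_k$ to the closed back hemisphere of $\partial K_j$ relative to $\eta$, on which the maximum of $\langle \cdot,\eta\rangle$ is attained on the equator $E_j(\eta) := \{q \in \partial K_j : \langle n(q),\eta\rangle = 0\}$. The decisive sub-claim is $E_j(\eta) \subset \{x : \langle x,\eta\rangle \leq c\}$, for this forces $\langle q_k,\eta\rangle \leq c$. Because $\eta$ is an outward normal to $H$ at the projection $\pi_H(q_k) \in \partial H$, and $\partial H$ is assembled from faces whose vertices are among the $p_{ij}$, one aims to express $\eta$ as a non-negative combination of the outward normals $n(p_{ij})$ realised at those vertices, and then use the $C^2$ strict convexity of $\partial K_j$ (via its Gauss map) together with the no-eclipse condition to embed $E_j(\eta)$ inside $H$.

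The main obstacle is this final geometric embedding step. Lemma~\ref{findorbits} together with the reflection law pins down the normal direction at an extremal reflection point but not its position within the global convex hull $H$; what is missing is a dual statement relating outward normals of $H$ to the normals $n(p_{ij})$ of the period-2 orbit points. I expect this is the step that will demand the greatest care, possibly requiring a quantitative strengthening of the no-eclipse hypothesis or additional regularity on $\partial K$ beyond $C^2$.
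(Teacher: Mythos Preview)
Your proposal attempts the conjecture in full generality, whereas the paper explicitly proves only the special case of spherical obstacles (and remarks that the same argument works for planar billiards and hyperspheres, leaving the general convex case open). So the gap you identify at the end is not a failure on your part: it is precisely the obstruction that keeps the statement a conjecture in the paper.

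The two arguments are genuinely different even in the spherical case. The paper picks a bounding hyperplane $\Pi$ of $H$ (for spheres this is a facet of the simplex of centres), assumes $q_1$ lies on the far side of $\Pi$, and shows by induction that the whole forward orbit drifts monotonically further from $\Pi$: since each centre lies on the near side, the outward normal $n(q_k)$ has positive $\nu$-component, so each reflection increases $v_k\cdot\nu$, hence $q_k\cdot\nu$ grows, contradicting periodicity. Your approach instead selects the orbit point $q_k$ maximising $\langle\,\cdot\,,\eta\rangle$ and uses the reflection law to force $\langle n(q_k),\eta\rangle\le 0$. For spheres this already closes: $n(q_k)=(q_k-c_j)/r_j$, so $\langle q_k,\eta\rangle\le\langle c_j,\eta\rangle\le c$, a contradiction. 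Your extremal-point argument is thus shorter for spheres and avoids the inductive drift estimate; the paper's monotone-escape argument, on the other hand, yields the slightly stronger qualitative statement that a trajectory starting outside $H$ escapes to infinity.

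The gap you flag for general strictly convex $K_j$ is real. Neither the ``back-hemisphere'' bound nor the proposed decomposition of $\eta$ into outward normals at the $p_{ij}$ is available without extra structure: the Gauss map of $\partial K_j$ sends the equator $E_j(\eta)$ to the great circle $\eta^\perp$, but there is no reason the preimage should sit inside $H$ for an arbitrary supporting direction $\eta$. What makes the spherical case work, in both arguments, is the single fact that $n(q)$ is the unit vector from the centre to $q$; this pins the position of $q$ once its normal direction is constrained. For general obstacles one would need some replacement for this, and your suspicion that a strengthening of the no-eclipse condition or of the regularity of $\partial K$ may be required is consistent with the paper's own caveat that ``the general case in higher dimensions may be more difficult.''
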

We prove this conjecture for the case of an $3$-dimensional billiard in which the obstacles are spheres. A very similar proof will work for all two-dimensional billiards, and higher dimensional billiards with hyperspherical obstacles. The general case in higher dimensions may be more difficult.

\begin{proof}[Proof of the conjecture for spherical obstacles]
If the obstacles are spheres, then $H \cap Q$ is simply the convex hull of the centres of the spheres intersected with $Q$. Suppose that $(q_1, \ldots , q_n)$ is a periodic trajectory, but that at least one point is outside $H$. Without loss of generality we can number the points and obstacles such that $q_1 \notin H$ and $\alpha_1 = 1$. $H$ is bounded by a number of planes, so $q_1 \in K_{1}$ is on the outside (i.e. the side not containing $H$) of one such plane, say $\Pi = \Pi_{123}$, determined by the centres of obstacles $K_1, K_2, K_3$. Let $\nu$ be the outward normal vector of $\Pi$ and denote $v_j = \frac{q_{j+1} - q_j}{\|q_{j+1} - q_j\|}$, (with the convention that $q_0 = q_n$). Without loss of generality, assume that $v_0 \cdot \nu > 0$.

For each $k \geq 1$ we have $q_{k+1} = q_k + d_k v_k$ and $v_{k} = v_{k-1} - 2 \langle v_{k-1}, n_K(q_k) \rangle n_K(q_k)$. We also have $\langle v_{k-1}, n_K(q_k) \rangle < 0$. We show by induction that $q_k \cdot \nu > q_1 \cdot \nu$ and $v_{k-1} \cdot \nu > v_0 \cdot \nu$ for all $k > 1$. 

Suppose $q_k \in \partial K_{\alpha_k}$ is on the outside of $\Pi$ and $v_{k-1} \cdot \nu > v_0 \cdot \nu$. The centre of $\partial K_{\alpha_k}$ is on the inside of $\Pi$, so the normal vector $n(q_k)$ must point away from $\Pi$, i.e. $n_K(q_k) \cdot \nu > 0$. So $v_k \cdot \nu = v_{k-1} \cdot \nu - 2 \langle v_{k-1}, n_K(q_k) \rangle n_K(q_k) \cdot \nu > v_{k-1} \cdot \nu > v_0 \cdot \nu$. Then $q_{k+1} \cdot \nu = q_k \cdot \nu + d_k v_k \cdot \nu > q_k \cdot \nu$. So $q_{k+1}$ is also on the outside.

For the orbit to be periodic we must have $q_1 = q_{n+1}$ for some $n$. So by contradiction, all periodic points must be contained in $H$. Since $H$ is a closed set and the periodic points are dense in $M_0$, we have $M_0 \subset H$.


\end{proof}

\begin{cor}[Corollary 1]
Given a billiard for which the above conjecture is true, the non-wandering set $M_0$ is entirely contained in $H$, which means any change to the billiard outside of $H$ will not have any effect on the non-wandering set, unless it introduces a new periodic point. This means all results in this paper (and perhaps others) apply to billiards that are not smooth or convex, or that violate the no-eclipse condition $(\textbf{H})$, provided that the intersection $K \cap H$ still satisfies these conditions.
\end{cor}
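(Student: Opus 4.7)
The plan is to combine the convex hull conjecture (which gives a pointwise containment of reflection points in $H$) with the convexity of $H$ itself (which upgrades this to containment of whole trajectories) and then observe that a trajectory confined to $H$ cannot detect any modification of $K$ outside $H$.

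First I would fix any $x \in M_0$ and use the conjecture to conclude that $\pi B^j x \in \partial K \cap H$ for every $j \in \mathbb{Z}$. Between the $j$-th and $(j{+}1)$-st reflection the flow moves on the straight segment joining $\pi B^j x$ and $\pi B^{j+1} x$; since $H$ is convex this segment lies in $H$, so the entire orbit $\{S_t(x): t \in \mathbb{R}\}$ is contained in $H$. In particular the collision history of $x$ is determined by $\partial K \cap H$ alone.

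Next, let $K'$ be any replacement of $K$ with $K' \cap H = K \cap H$. Every orbit in $M_0$ is still a legitimate bounded billiard orbit for $K'$ (it never encounters the altered part of the boundary), so $M_0 \subseteq M_0'$. If moreover $K' \cap H$ continues to satisfy strict convexity, smoothness and the no-eclipse hypothesis $(\textbf{H})$, then the conjecture applies to $K'$ as well and, by density of periodic points, any point of $M_0'$ whose orbit lies in $H$ already belongs to $M_0$. Conversely, each of the billiard constants $d_{\min}, d_{\max}, \kappa^{\pm}, \phi^+, g_{\min}, g_{\max}$ appearing in the estimates of the previous sections is computed from data on $\partial K \cap H$ only, and so the bounds on $\dim_H M_0$ carry over verbatim to the new billiard.

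The main obstacle, rather than a calculation, is the caveat: the modification $K'$ could spawn a brand-new periodic orbit lying entirely outside $H$ (e.g.\ if one adds a fresh cluster of obstacles far from $H$), and such an orbit would lie in $M_0' \setminus M_0$. The corollary explicitly excludes this case, so the proof reduces to stating the containment $M_0 \subseteq M_0'$ together with the observation that the dimension estimates depend only on $K \cap H$; one does not need to rule out exogenous periodic orbits, only to acknowledge them.
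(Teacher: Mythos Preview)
Your proposal is correct and spells out the straightforward argument that the paper leaves entirely implicit: in the paper Corollary~1 is stated without any proof, as an immediate consequence of the conjecture. Your use of the convexity of $H$ to pass from containment of reflection points to containment of full trajectories, together with your explicit handling of the caveat about newly created periodic orbits, are precisely the details one would supply if asked to justify the statement.
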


\begin{cor}[Corollary 2]
\label{Corollary 2}
In cases where the conjecture is true, we can use the set $H$ to find better estimates for billiard constants. For example, we can estimate $d_{\max} \leq \emph{diam } H$. The minimum and maximum curvatures over $M_0$ can be estimated by $\kappa^- \leq \displaystyle \min_{q \in \partial K \cap H} \kappa_{\min}(q)$ and $\kappa^+ \leq \displaystyle \min_{q \in \partial K \cap H} \kappa_{\max}(q)$. 

\end{cor}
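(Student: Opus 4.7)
The proof proposal for Corollary \ref{Corollary 2} is essentially a packaging of the inclusion $M_0 \subset H$ (the conclusion of the convex hull conjecture, proved above for spherical obstacles) together with the definitions of the billiard constants in Section \ref{Properties of open billiards}. The key observation is that every constant in question is defined as an extremum of a continuous function over the non-wandering set (or its projection to $\partial K$), and so enlarging the underlying set from $\pi M_0$ to $\partial K \cap H$ can only weaken (but not violate) the extremal bound, producing an estimate that uses only geometric data of $K$ and $H$ rather than the a priori unknown set $M_0$.

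The plan is as follows. First, I would note that by the conjecture, $M_0 \subset H$, and since $\pi$ is the canonical projection to $\partial K$, we have $\pi M_0 \subset \partial K \cap H$. For the estimate $d_{\max} \leq \operatorname{diam} H$, I would unwind the definition: $d_{\max}$ is the supremum of $d(\pi x, \pi y)$ with $x \in K_i \cap M_0, y \in K_j \cap M_0$, and since all such $\pi x, \pi y$ lie in $H$, each distance is bounded by $\operatorname{diam} H$. Second, for the curvature estimates, I would use that $\kappa^-$ and $\kappa^+$ are the infimum and supremum of the continuous functions $\kappa_{\min}$ and $\kappa_{\max}$ over $\pi M_0$; replacing the domain $\pi M_0$ by the containing set $\partial K \cap H$ enlarges the set over which the extrema are taken, giving
\[
\kappa^- \;\geq\; \min_{q \in \partial K \cap H} \kappa_{\min}(q), \qquad \kappa^+ \;\leq\; \max_{q \in \partial K \cap H} \kappa_{\max}(q),
\]
which are the usable one-sided bounds (the statement in the corollary should be read in this direction; the roles of $\min$ and $\max$ on the right-hand sides are the natural ones dictated by the containment).

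There is essentially no analytic obstacle here, since the arguments are purely set-theoretic consequences of $\pi M_0 \subset \partial K \cap H$ and of the definitions. The only mild subtlety is that the bounds are one-sided, so one must keep track of which direction of inequality is actually useful for the downstream applications: namely, upper bounds on $d_{\max}$ and $\kappa^+$, and lower bounds on $d_{\min}$ and $\kappa^-$, since these are precisely what control the constants $\lambda_1, \mu_1, g_{\min}, g_{\max}$ entering Theorem \ref{main theorem} in a monotone way.

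The main \emph{conceptual} point, rather than a proof obstacle, is that Corollary \ref{Corollary 2} is only as strong as the hypotheses permit: since the conjecture is established in this paper only for spherical (and, by the sketched extension, hyperspherical and planar) obstacles, the corollary should be stated under that hypothesis, and in the general case it is conditional on the conjecture. I would therefore phrase the corollary explicitly as holding whenever the convex hull conjecture holds for the billiard, and would include a brief remark that for spherical obstacles no additional hypothesis beyond $(\mathbf{H})$ is needed. The actual inequalities then follow in one or two lines from the inclusion $\pi M_0 \subset \partial K \cap H$.
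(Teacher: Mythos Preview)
Your proposal is correct and is precisely the natural argument; in fact the paper states Corollary~\ref{Corollary 2} without any proof, treating it as an immediate consequence of the inclusion $M_0 \subset H$ together with the definitions of $d_{\max}$, $\kappa^-$, $\kappa^+$ from Section~\ref{Properties of open billiards}, which is exactly what you unpack. Your additional observation about the intended directions of the inequalities (namely $\kappa^- \geq \min_{q \in \partial K \cap H} \kappa_{\min}(q)$ and $\kappa^+ \leq \max_{q \in \partial K \cap H} \kappa_{\max}(q)$) is also correct and catches what appear to be typos in the corollary as stated.
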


\subsection{Adjusted domain of $g$}
Recall that the natural domain for the function $g$ is $[\kappa^- \cos \phi^+, \frac{\kappa^+}{\cos \phi^+}] \times [d_{\min}, d_{\max}]$. This applies in billiards where the dimension $D > 2$; when $D = 2$ the natural domain is $[\kappa^-, \frac{\kappa^+}{\cos \phi^+}] \times [d_{\min}, d_{\max}]$ (see the end of section \ref{Estimating Theta}). To cover both cases at once, we let $\iota = 0$ if $D = 2$ and $\iota = 1$ if $D > 2$, so that $\cos^{\iota} \phi$ is $1$ if $D=2$ and $\cos \phi$ otherwise. Define the adjusted domain by $$\mathbb{D} = \displaystyle \bigcup_{i,j} [\kappa^-_i \cos^{\iota} \phi^+_{ij}, \frac{\kappa^+_i}{\cos \phi^+_{ij}}] \times [d^-_{ij}, d^+_{ij}]$$

where $\kappa^-_i, \kappa^+_i$ are the minimum and maximum curvatures on $\partial K_i \cap H$, $d^-_{ij} \geq |p_{ij} - p_{ji}|, d^+_{ij} \leq \displaystyle \max_{k,l} |p_{ik} - p_{jl}|$ are the minimum and maximum distances between $K_i \cap H$ and $K_j \cap H$, and $\phi_{ij} = \max \{\phi(x) : x \in K_i \cap H, B x \in K_j \cap H \}$ is the maximum collision angle over trajectories from $K_i$ to $K_j$. These $\phi_{ij}$ can be estimated by $\cos \phi_{ij} \geq \frac{b^-_{ij}}{d_{\max}}$ where $b^-_{ij} = \displaystyle \min_{k} d(K_i, Cvx(K_j, K_k))$.

The minimum and maximum values of $g$ over the natural domain may be outside of the adjusted domain. The minimum and maximum values in the adjusted domain are given by 

\begin{align*}
g_{\min} &= \min \{ g(\gamma, \theta) : (\gamma, \theta) \in \mathbb{D} \} = \min \{ g(\kappa^-_i \cos^{\iota} \phi_{ij}, d^+_{ij}), 1 \leq i,j, \leq u \}\\
g_{\max} &= \max \{ g(\gamma, \theta) : (\gamma, \theta) \in \mathbb{D} \} = \max \left\{ g \left(\frac{\kappa^+_j}{\cos \phi_{ij}}, d^-_{ij} \right), 1 \leq i,j, \leq u \right\}
\end{align*}

\begin{lem}
For any $x = (q,v) \in M_0$, we have $(\frac{\kappa(q_j)}{\cos \phi_j(x)}, d_j(x)) \in \mathbb{D}$ and $(\kappa(q_j) \cos^{\iota} \phi_j(x)), d_j(x)) \in \mathbb{D}$ for all $j \in \mathbb{Z}$. 
\begin{proof}
Assume $D > 2$. Since $q_j = \pi B^j x \in M_0$ for all $j \in \mathbb{Z}$, we have
$\kappa(q_j) \in [\kappa_{i_j}^-, \kappa_{i_j}^+], \phi(B^j x) \in [0, \phi^+_{i_j i_{j+1}}]$, and $d(q_j, q_{j+1}) \in [d^-_{i_j i_{j+1}}, d^+_{i_j i_{j+1}}]$. Hence there exist some integers $1 \leq a,b, \leq u$ such that $\kappa(q_j) \cos \phi(B^j x) \geq \kappa^-_a \cos \phi^+_{ab}$ and $\frac{\kappa(q_j)}{\cos \phi(B^j x)} \leq \frac{\kappa^+_a}{\cos \phi^+_{ab}}$. For the same $a,b$ we have $d(q_j) \in [d^-_{ab},d^+_{ab}]$. The proof for $D = 2$ is analogous.
\end{proof}

\end{lem}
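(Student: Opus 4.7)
The plan is to exploit the coding of orbits in $M_0$ to pin down, for a fixed reflection index $j$, a single pair $(a,b)$ of obstacle indices whose associated rectangle in $\mathbb{D}$ simultaneously contains both pairs claimed in the lemma. Concretely, for $x \in M_0$ I would let $(i_k)_{k \in \mathbb{Z}}$ be the bi-infinite coding sequence, set $a = i_j$ and $b = i_{j+1}$, and observe that $q_j \in \partial K_a \cap M_0$ and $q_{j+1} \in \partial K_b \cap M_0$. The segment $[q_j, q_{j+1}]$ is then a link of a billiard trajectory in $M_0$ going from $K_a$ to $K_b$, which is exactly the situation the constants $\kappa_a^\pm$, $\phi^+_{ab}$, $d^\pm_{ab}$ were designed to bound.

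Next I would collect the three relevant inequalities: $\kappa(q_j) \in [\kappa_a^-, \kappa_a^+]$ by the definition of the per-obstacle curvature bounds on $\partial K_a \cap H$; $\phi_j(x) \in [0, \phi^+_{ab}]$ by the definition of $\phi_{ab}$ as the supremum of $\phi$ over points of $K_a \cap H$ whose $B$-image lies in $K_b \cap H$; and $d_j(x) = \|q_j - q_{j+1}\| \in [d^-_{ab}, d^+_{ab}]$ by the definition of those minimal and maximal inter-obstacle distances. Crucially, all three inequalities hold for the \emph{same} pair $(a,b)$, so the $d$-coordinate and the $\kappa,\phi$-coordinates can be controlled jointly.

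With these in hand, the conclusion is a short monotonicity argument on $[0, \pi/2)$: since $\cos$ is decreasing and $1/\cos$ is increasing there,
\begin{align*}
\frac{\kappa(q_j)}{\cos \phi_j(x)} &\le \frac{\kappa_a^+}{\cos \phi^+_{ab}}, \qquad \frac{\kappa(q_j)}{\cos \phi_j(x)} \ge \kappa_a^- \ge \kappa_a^- \cos^{\iota} \phi^+_{ab}, \\
\kappa(q_j) \cos^{\iota} \phi_j(x) &\le \kappa_a^+ \le \frac{\kappa_a^+}{\cos \phi^+_{ab}}, \qquad \kappa(q_j) \cos^{\iota} \phi_j(x) \ge \kappa_a^- \cos^{\iota} \phi^+_{ab},
\end{align*}
so both pairs lie in the rectangle $[\kappa_a^- \cos^{\iota} \phi^+_{ab}, \kappa_a^+/\cos \phi^+_{ab}] \times [d^-_{ab}, d^+_{ab}]$, which is one of the constituent boxes of $\mathbb{D}$. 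The case $D = 2$ is handled by specialising $\iota = 0$, so that $\cos^{\iota}\phi \equiv 1$ and the lower bound on $\kappa(q_j)/\cos\phi_j(x)$ collapses to the trivial inequality $\kappa(q_j) \ge \kappa_a^-$.

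There is no real obstacle here; the only subtlety worth checking carefully is the bookkeeping of indices, namely that $d_j(x)$ is the distance associated with the step from $q_j$ to $q_{j+1}$ (so that it is controlled by $d^\pm_{i_j i_{j+1}}$ rather than $d^\pm_{i_{j-1} i_j}$), and that $\phi^+_{ab}$ strictly less than $\pi/2$ so the denominators $\cos\phi^+_{ab}$ are bounded away from zero. Both follow from condition $(\mathbf{H})$ and the estimate $\cos \phi_{ij} \geq b^-_{ij}/d_{\max} > 0$ recorded just before the lemma.
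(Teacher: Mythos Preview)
Your proposal is correct and follows essentially the same route as the paper's own proof: fix $a=i_j$, $b=i_{j+1}$, read off the three bounds $\kappa(q_j)\in[\kappa_a^-,\kappa_a^+]$, $\phi_j(x)\in[0,\phi^+_{ab}]$, $d_j(x)\in[d^-_{ab},d^+_{ab}]$ from the definitions, and combine via the monotonicity of $\cos$. If anything, your write-up is more explicit than the paper's, since you verify all four endpoint inequalities and spell out why the same pair $(a,b)$ handles both coordinates, whereas the paper records only two of the inequalities and leaves the rest implicit.
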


\begin{exm}
Consider the billiard displayed in Figure 1 consisting of three disks arranged in an isoceles triangle of height $10$ and base length $8$. The disks $K_1, K_2, K_3$ have radii 1, 2 and 3 respectively. The solid lines give the distances $d_{ij}^-$ and the dashed lines give the distances $d_{ij}^+$. Figure 2 displays the adjusted domain over the natural domain, with contour lines of the function $g(\gamma, \theta)$. The following calculations were obtained using the programs Geogebra and Mathematica. 

Using the adjusted domain rather than the natural domain means that the interval $[g_{\min},g_{\max}]$ is reduced from $[0.760, 7.34]$ to $[0.762, 3.41]$. Using the natural domain we have the estimate $$0.326 \leq \dim_H M_0 \leq 1.167,$$ but with the adjusted domain we get $$0.396 \leq \dim_H M_0 \leq 1.165.$$

\includegraphics[width=70mm]{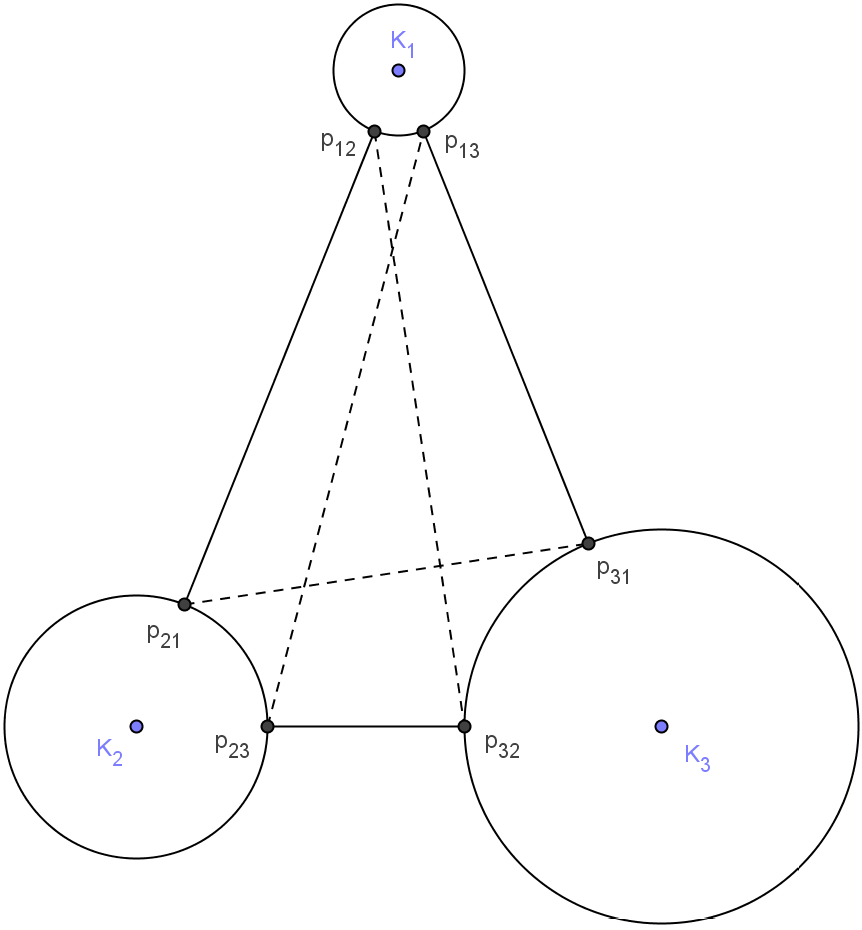}\\
\includegraphics[width=80mm]{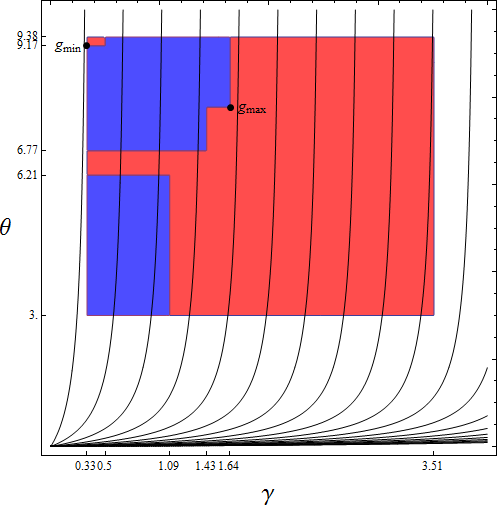}

\end{exm}


\begin{thebibliography}{10}

\bibitem[B]{Barreira1}
L. Barreira, A non-additive thermodynamic formalism and applications to dimension theory of hyperbolic dynamical sytems. Ergod. Th. \& Dynam. Sys. \textbf{16}(1996), 871-927.

\bibitem[BCST]{BalChe}
P. B\`{a}lint, N. Chernov, D. Sz\'{a}sz and I. P. T\'{o}th. Geometry of multi-dimensional dispersing billiards. \textit{Ast\'{e}risque} \textbf{286}(2003), 119-150.

\bibitem[BPS]{local product structure}
L. Barreira, Y. Pesin, J. Schmeling, Dimension and Product Structure of Hyperbolic Measures, Ann. of Math. \textbf{149}(1999) no. 3, 755-783.

\bibitem[Ch]{ChaoticBilliards}
N. Chernov, R. Markarian, Chaotic Billiards, American Mathematical Society, Mathematical Surveys and Monographs, \textbf{127}(2006).

\bibitem[Ed]{Edgar}
G. Edgar, Measure, topology and fractal geometry. Springer-Verlag, New York, 1990.

\bibitem[Fa]{FractalGeometry}
K. Falconer, Fractal Geometry, Wiley (2003).

\bibitem[H]{Hassleblatt}
B. Hassleblatt, Regularity of the Anosov splitting II. Ergod. Th. \& Dynam. Sys. \textbf{17}(1997), 169-172.

\bibitem[HS]{HSconjecture}
B. Hasselblatt, J. Schmeling, Dimension product structure of hyperbolic sets, Electon. Res. Announc. Amer. Math. Soc. \textbf{10}(2004), 88-96.

\bibitem[Ke]{Kenny}
R. Kenny, Estimates of Hausdorff Dimension for the Non-Wandering Set of an Open Planar Billiard, Canad. J. Math. \textbf{120}(2004), 115-133.

\bibitem[M]{Morita}
T. Morita, The symbolic representation of billiards without boundary condition. Trans. Amer. Math. Soc. \textbf{325}(1983), 819-828.

\bibitem[P]{Pesin}
Y. Pesin, Dimension theory in Dynamical Systems, Contemporary Views and Applications, Chic. Lect. in Math. (1997).

\bibitem[S]{Sinai}
Ya. G. Sinai, "Dynamical Systems with Elastic Reflections", Russian Math. Surveys, \textbf{25}(1970), 137-191.

\bibitem[Sj\"{o}]{Sjostrand}
Johannes Sj\"{o}strand, Geometric bounds on the density of resonances for semiclassical problems. Duke Math. J. \textbf{60}(1990), 1-57.

\bibitem[Sto1]{Non-integrability}
L. Stoyanov, Non-integrability of Open Billiard Flows and Dolgopyat Type Estimates, Ergod. Th. and Dynam. Sys. 

doi:10.1017/S0143385710000933

\bibitem[Sto2]{Stoyanov2}
L. Stoyanov, An estimate from above of the number of periodic orbits for semi-dispersed billiards, Commun. Math. Phys. \textbf{124}(1989), 217-227 

\bibitem[Sto3]{Spectrum}
L. Stoyanov, Spectrum of the Ruelle operator and exponential decay of correlations for open billiard flows, Amer. J. Math. \textbf{123}(2001), 715-759 

\end{thebibliography}
\end{document}